\newtheorem{theorem}{Theorem}[section]
\newtheorem{lemma}[theorem]{Lemma}
\newtheorem{proposition}[theorem]{Proposition}
\theoremstyle{definition}
\newcommand{\PSH}{{\rm PSH}}
\newcommand{\vol}{{\rm Vol}}
\newcommand{\capa}{{\rm Cap}}
\numberwithin{equation}{section}
\begin{document}

\title{Comparison of Monge-Amp\`ere capacities}
\author{Chinh H. Lu}
\address{Universit\'e Paris-Saclay, CNRS, Laboratoire de Math\'ematiques d'Orsay 91405 Orsay Cedex, France}
\email{\href{mailto:hoang-chinh.lu@universite-paris-saclay.fr}{hoang-chinh.lu@universite-paris-saclay.fr}}
\urladdr{\href{https://www.imo.universite-paris-saclay.fr/~lu/}{https://www.imo.universite-paris-saclay.fr/$\sim$lu/}}

\keywords{Plurisubharmonic function, Monge-Amp\`ere capacity, integration by parts, weak convergence.}
\subjclass[2010]{32W20, 32U05, 32Q15}

\date{\today}

\begin{abstract}
Let $(X,\omega)$ be a compact K\"ahler manifold. We prove that all Monge-Amp\`ere capacities are comparable. Using this we give an alternative direct proof of the integration by parts formula for non-pluripolar products recently proved by M. Xia. 
\end{abstract}

\maketitle

\tableofcontents

\section{Introduction}
Since Yau's solution to Calabi's conjecture \cite{Yau78} geometric pluripotential theory has found its important place in the development of differential geometry. An important tool in the theory is the Monge-Amp\`ere capacity introduced by Bedford and Taylor \cite{BT82}. By analyzing capacities of sublevel sets, Ko{\l}odziej \cite{Kol98} has established a fundamental $L^{\infty}$-estimate for complex Monge-Amp\`ere equations. Several capacities have been studied in the literature with interesting applications, see \cite{GZ05,BEGZ10,DnL15,DnL17,DDL2,BT82,Kol98,DL18} and the references therein. The goal of this  note is to quantitatively compare these capacities. 

Let $(X,\omega)$ be a compact K\"ahler manifold of dimension $n$. Fix a smooth closed real $(1,1)$-form $\theta$ such that the De Rham cohomology class $\{\theta\}$ is big. Given $\psi\in\PSH(X,\theta)$ we define, for a Borel subset $E\subset X$,  
\[
{\rm Cap}_{\theta,\psi}(E)=\sup\left\{ \int_{E}\theta_{u}^{n}\ :
\ u\in\PSH(X,\theta),\ \psi-1\leq u\leq\psi\right\} .
\]
Here $\theta_{u}^{n}$ is the non-pluripolar Monge-Amp\`ere measure of $u$, see Section \ref{sect: preliminary}. 

The fact that these capacities characterize pluripolar sets suggests that they are all comparable. This is the content of our main result:
\begin{theorem}
\label{thm: Cap comparison intro} Let $\theta_{1},\theta_{2}$ be smooth closed real $(1,1)$-forms on $X$ which represent big cohomology classes. Assume that $\psi_{1}\in\PSH(X,\theta_{1})$ and $\psi_{2}\in\PSH(X,\theta_{2})$ are such that $\int_{X}(\theta_{1}+dd^{c}\psi_{1})^{n}>0$ and $\int_{X}(\theta+dd^{c}\psi_{2})^{n}>0$. Then there exist continuous functions $f,g:[0,+\infty)\rightarrow[0,+\infty)$ with $f(0)=g(0)=0$ such that, for any Borel set $E\subset X$, 
\[
\capa_{\theta_{1},\psi_{1}}(E)\leq f(\capa_{\theta_2,\psi_2}(E)),\; \capa_{\theta_2,\psi_2}(E)\leq g(\capa_{\theta_1,\psi_1}(E)).
\]
\end{theorem}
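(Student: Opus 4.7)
By symmetry it suffices to construct $f$. My plan is to bridge the two capacities through a Kähler reference: fix a Kähler form $\omega$ on $X$ with $\omega\ge\theta_i$ for $i=1,2$, and set $\capa_\omega:=\capa_{\omega,0}$. I will establish
\[
\capa_{\theta_1,\psi_1}(E)\le F(\capa_\omega(E)),\qquad \capa_\omega(E)\le C\cdot\capa_{\theta_2,\psi_2}(E),
\]
for some continuous $F$ with $F(0)=0$ and some $C>0$; then $f(t):=F(Ct)$ satisfies the conclusion.

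For the second inequality (the lower bound on $\capa_{\theta_2,\psi_2}$), the positive-mass hypothesis combined with bigness of $\{\theta_2\}$ supplies, via Demailly's regularization and the Darvas--Di Nezza--Lu model-potential theory (after possibly replacing $\psi_2$ by its envelope $P[\psi_2]$, a model of the same mass), a $\phi\in\PSH(X,\theta_2)$ with $|\phi-\psi_2|=O(1)$ and $\theta_2+dd^c\phi\ge\varepsilon\omega$ as currents, for some $\varepsilon>0$. For any competitor $v\in\PSH(X,\omega)$ with $-1\le v\le 0$, the function $w:=\varepsilon v+\phi+c_0$ (with $c_0$ chosen to absorb the oscillation of $\phi-\psi_2$) is $\theta_2$-psh, fits an enlarged admissibility window $\psi_2-\alpha\le w\le\psi_2$ with $\alpha=O(1)$, and satisfies $\theta_{2,w}^n\ge\varepsilon^n\,\omega_v^n$ by applying the binomial inequality $(A+B)^n\ge B^n$ (for positive currents $A,B$) to the decomposition $\theta_2+dd^c\phi+\varepsilon\,dd^c v=(\theta_2+dd^c\phi-\varepsilon\omega)+\varepsilon(\omega+dd^c v)$. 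A routine scaling identity comparing $\capa^{(\alpha)}_{\theta_2,\psi_2}$ (the capacity with enlarged admissibility window $\psi_2-\alpha\le u\le\psi_2$) to $\capa_{\theta_2,\psi_2}$ then yields the lower bound with a linear constant $C=O((\alpha/\varepsilon)^n)$.

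For the upper bound, given admissible $u$ with $\psi_1-1\le u\le\psi_1$, the pointwise inequality $\theta_1\le\omega$ gives $\theta_{1,u}^n\le\omega_u^n$, so it suffices to bound $\int_E\omega_u^n$ by a continuous $F(\capa_\omega(E))$ uniformly in $u$. Setting $M_0:=\sup\psi_1$, I split
\[
\int_E\omega_u^n=\int_{E\cap\{u>M_0-s\}}\omega_u^n+\int_{E\cap\{u\le M_0-s\}}\omega_u^n.
\]
The first term is controlled by $s^n\capa_\omega(E)$ through the truncation $u_s:=\max(u-M_0,-s)$ and the rescaling $u_s/s\in\PSH(X,\omega/s)$ (together with the trivial bound $\capa_{\omega/s,0}\le\capa_\omega$ for $s\ge 1$). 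The second term is majorized by $\omega_u^n(\{\psi_1\le M_0-s+1\})$ (using $u\ge\psi_1-1$), which tends to $0$ as $s\to\infty$ since the sublevel sets shrink to the pluripolar locus $\{\psi_1=-\infty\}$ and the non-pluripolar measure $\omega_u^n$ cannot charge it. Optimizing $s=s(\capa_\omega(E))$ then produces the continuous $F$. The principal obstacle is to obtain the second-term decay \emph{uniformly} over the singular admissible family $\{u:\psi_1-1\le u\le\psi_1\}$; I expect this from the Darvas--Di Nezza--Lu stability theory for non-pluripolar Monge--Amp\`ere measures of potentials with prescribed singularity type $[\psi_1]$, together with the uniform mass conservation $\int_X\omega_u^n=\int_X\omega_{\psi_1}^n$ on the admissible class.
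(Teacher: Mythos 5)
Your overall strategy --- bridging both capacities through the Bedford--Taylor capacity $\capa_{\omega}$ and composing the two resulting modulus functions --- is exactly the paper's (Theorem \ref{thm: Cap comparison}), but both of your bridges have gaps, and the lower bound rests on a false existence claim. You assert that $\int_{X}(\theta_{2}+dd^{c}\psi_{2})^{n}>0$ supplies $\phi\in\PSH(X,\theta_{2})$ with $|\phi-\psi_{2}|=O(1)$ and $\theta_{2}+dd^{c}\phi\geq\varepsilon\omega$. That is a K\"ahler current with the same singularity type as $\psi_{2}$, and it need not exist: take $\{\theta_{2}\}$ big and nef but not K\"ahler and $\psi_{2}=V_{\theta_{2}}$ (full, hence positive, mass and zero Lelong numbers everywhere); every K\"ahler current in $\{\theta_{2}\}$ has a positive Lelong number somewhere on the non-K\"ahler locus, so no potential within bounded distance of $V_{\theta_{2}}$ can carry one. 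Even for $\theta_{2}=\omega$ K\"ahler the claim fails when $\psi_{2}$ attains the maximal Lelong number permitted by the class at some point, since $\theta_{2}+dd^{c}\phi\geq\varepsilon\omega$ forces $\phi$ to be $(1-\varepsilon)\omega$-psh up to a smooth error. The paper avoids K\"ahler currents entirely: the lower bound comes from extremal functions, via $\capa_{\omega}(E)\leq CM_{E,\omega}^{-1}\leq CM_{E,\theta,\phi}^{-1}\leq C'\capa_{\theta,\phi}(E)^{1/n}$ (Lemma \ref{lem: cap upper bound} combined with \cite[Proposition 7.1]{GZ05} and \cite[Lemma 4.9]{DDL2}), followed by a transfer from the model envelope $\phi=P_{\omega}[\psi]$ down to $\psi$ using $P_{\theta}(2\psi-\phi)$ and Lemma \ref{lem: GLZ}.

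For the upper bound you correctly isolate the crux --- the decay of $\int_{E\cap\{u\leq M_{0}-s\}}\omega_{u}^{n}$ \emph{uniformly} over the admissible family --- but you supply no mechanism for it. The measure $\omega_{u}^{n}$ varies with $u$, so the fact that $\{\psi_{1}\leq M_{0}-s+1\}$ shrinks to a pluripolar set gives nothing uniform by itself, and an appeal to ``stability theory'' is not a proof; uniformity over all $u$ with $\psi_{1}-1\leq u\leq\psi_{1}$ is precisely what Theorem \ref{thm: Cap comparison intro} encodes. The paper's mechanism is in Lemma \ref{lem: GLZ}: the inclusion $\{u\leq\varphi-2t\}\subset\{\psi-1\leq\frac{u+\varphi}{2}-t\}$ together with the comparison principle converts $\int_{\{u\leq\varphi-2t\}}\theta_{u}^{n}$ into $2^{n}\int_{\{\psi\leq\varphi-t+1\}}\theta_{\psi}^{n}$, a sublevel-set mass of the \emph{fixed} measure $\theta_{\psi}^{n}$, made quantitative by a weight $\chi$ with $\int_{X}|\chi(\psi-1-\varphi)|\theta_{\psi}^{n}<+\infty$; note that this step requires $\varphi$ and $\psi$ to have equal masses (it is applied with $\varphi$ the model envelope of $\psi$), a hypothesis absent from your sketch. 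Your treatment of the first term by truncation and rescaling is sound and coincides with the ``claim'' inside Lemma \ref{lem: GLZ}, but without the comparison-principle step the argument does not close.
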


A. Trusiani has recently proved in \cite{Trus20} a comparison of Monge-Amp\`ere $\phi$-capacities for model potential $\phi$ using the metric geometry of relative finite energy classes introduced in \cite{Trus19}.

Using the comparison of capacities we provide a new proof of the integration by parts formula, a result recently proved in \cite{Xia19}. The  proof of \cite{Xia19} relies on a construction of D. Witt-Nystr\"om \cite{WN19}. Our proof uses a direct approximation method partially inspired by \cite{DNT19}.
\begin{theorem}
\cite{Xia19} \label{thm: integration by parts} Let $u,v\in L^{\infty}(X)$ be differences of quasi plurisubharmonic functions. Fix $\phi_j \in \PSH(X,\theta^j)$, $j=2,...,n$ where $\{\theta_j\}$ is big. Then
\[
\int_{X} u dd^c v \wedge \theta^2_{\phi_2}\wedge ... \wedge \theta^n_{\phi_n} =\int_{X}v dd^c u \wedge \theta^2_{\phi_2}\wedge ... \wedge \theta^n_{\phi_n}.
\]
\end{theorem}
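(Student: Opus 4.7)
My strategy is approximation by more regular potentials, using Theorem \ref{thm: Cap comparison intro} to pass to the limit. First I would establish the statement under the additional hypothesis that each $\phi_j$ has minimal singularities in its class $\{\theta^j\}$. In that case the non-pluripolar product $T:=\theta^2_{\phi_2}\wedge\cdots\wedge\theta^n_{\phi_n}$ is supported on the ample locus $\Omega$ of $\sum_{j\ge 2}\{\theta^j\}$, where the $\phi_j$ are locally bounded and $u,v$ are bounded; hence the classical Bedford--Taylor integration by parts applies locally. A cut-off argument combined with the fact that $T$ puts no mass on the pluripolar set $X\setminus\Omega$ extends this to the global identity. This baseline case is essentially in \cite{BEGZ10}.

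\textbf{Approximation and limit.} For general $\phi_j\in\PSH(X,\theta^j)$, truncate by setting
\[
\phi_j^k := \max\bigl(\phi_j,\, V_{\theta^j}-k\bigr),
\]
where $V_{\theta^j}\in\PSH(X,\theta^j)$ is the upper envelope of nonpositive $\theta^j$-psh functions. Each $\phi_j^k$ has minimal singularities and $\phi_j^k\searrow\phi_j$, so the baseline case yields
\[
\int_X u\,dd^c v\wedge T_k \;=\; \int_X v\,dd^c u\wedge T_k, \qquad T_k:=\theta^2_{\phi_2^k}\wedge\cdots\wedge\theta^n_{\phi_n^k}.
\]
By plurifine locality of non-pluripolar products, $T_k=T$ on the plurifine open set $U_k:=\bigcap_{j=2}^n\{\phi_j>V_{\theta^j}-k\}$. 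The intrinsic capacities $\capa_{\theta^j,V_{\theta^j}}(X\setminus U_k)$ tend to $0$ as $k\to\infty$ by quasi-continuity, and Theorem \ref{thm: Cap comparison intro} transfers this to a vanishing bound on the single reference capacity $\capa_{\omega,0}(X\setminus U_k)$. Writing $u=u_1-u_2$, $v=v_1-v_2$ with $u_i,v_i$ bounded $C\omega$-psh for some $C>0$, Chern--Levine--Nirenberg-type estimates with respect to $\capa_{\omega,0}$ bound the contribution on $X\setminus U_k$ of each side by a quantity tending to $0$; on $U_k$ we may pass to the limit since $T_k$ and $T$ agree there. This produces the desired identity.

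\textbf{Main obstacle.} The essential difficulty is the loss of pluripolar mass as $\phi_j^k\searrow\phi_j$: the convergence $T_k\to T$ is only weak modulo currents carried by pluripolar sets, and what one needs is a quantitative estimate for how much of $u\,dd^c v\wedge T_k$ sits on the "bad" set. The natural capacities that control each individual factor depend on $j$, and Theorem \ref{thm: Cap comparison intro} is precisely what allows us to replace all of them by a single $\capa_{\omega,0}$, after which a uniform Chern--Levine--Nirenberg-type estimate of the form $\int_{X\setminus U_k}|u|\,dd^c v\wedge T_k \le C\,\capa_{\omega,0}(X\setminus U_k)^{\alpha}$ becomes tractable. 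Producing such a uniform estimate, in the spirit of the approximation scheme of \cite{DNT19}, is the technical heart of the argument.
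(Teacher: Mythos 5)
There is a genuine gap at the step you yourself flag as the technical heart, and in the form you propose it cannot be closed. The obstruction is that the truncated products $T_k=\theta^2_{\phi_2^k}\wedge\cdots\wedge\theta^n_{\phi_n^k}$ do \emph{not} converge to $T$ in mass when the $\phi_j$ fail to have full Monge--Amp\`ere mass relative to minimal singularities. Writing $v=v_1-v_2$ with $v_i$ bounded and $C\omega$-psh, each $\phi_j^k$ has minimal singularities, so by the monotonicity of non-pluripolar masses $\int_X(C\omega+dd^cv_1)\wedge T_k$ equals the full mixed volume for \emph{every} $k$, whereas $\int_X(C\omega+dd^cv_1)\wedge T$ can be strictly smaller (already for $n=2$ with $\phi_2$ having analytic singularities along a divisor). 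Since $T_k=T$ on $U_k$ and $v_1$ is bounded, $\int_{X\setminus U_k}(C\omega+dd^cv_1)\wedge T_k=\int_X(C\omega+dd^cv_1)\wedge T_k-\int_{U_k}(C\omega+dd^cv_1)\wedge T$ converges to a strictly positive number $\delta$, even though $\capa_{\omega}(X\setminus U_k)\to 0$. Hence no uniform estimate of the form $\int_{X\setminus U_k}|u|\,(C\omega+dd^cv_i)\wedge T_k\le C\,\capa_\omega(X\setminus U_k)^{\alpha}$ can hold, and the measures $(C\omega+dd^cv_i)\wedge T_k$ are not uniformly dominated by any function of $\capa_\omega$ vanishing at $0$. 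What is actually needed is a cancellation between the two signed error terms concentrated near the polar set, which is essentially the original difficulty; your scheme does not address it.

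The paper avoids this by never deforming the singularity type toward minimal singularities. The key Lemma \ref{lem: pre integration by parts} works with potentials of a \emph{fixed} singularity type and approximates $\varphi_2$ by $\varphi_{2,\varepsilon}=\max(\varphi_1,(1+\varepsilon)\varphi_2)$ (similarly for $\psi_2$): these approximants coincide with $\varphi_1$ near the polar locus (so a Stokes/plurifine-locality argument applies after bounded truncation), and, crucially, remain within a fixed distance $B$ of $\varphi_1$ \emph{uniformly in} $\varepsilon$. The mixed measures $S_{j,\varepsilon}$ are then dominated by $C(5\theta+dd^c(\varphi_1+\varphi_{2,\varepsilon}+\psi_1+\psi_{2,\varepsilon}))^n$ and hence, via Theorem \ref{thm: Cap comparison}, uniformly by $f(\capa_{\omega}(\cdot))$, so Theorem \ref{thm: convergence} passes to the limit with no mass loss. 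Theorem \ref{thm: integration by parts} is finally deduced from that same-singularity-type identity by polynomial identities in auxiliary parameters ($\psi_{j,s}=s\psi_j+(1-s)\phi$, then $\theta+t\omega$, then $\phi=\sum_j s_j\phi_j$), not by reduction to the minimal-singularity case of \cite{BEGZ10}. This is where your use of Theorem \ref{thm: Cap comparison intro} should enter: to dominate approximating measures built from potentials staying at bounded distance from a fixed one, rather than to control the complement of $U_k$.
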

Here, if $u=\varphi-\psi$ with $\varphi,\psi \in \PSH(X,\eta)$ then, by definition,
\[
dd^{c}u \wedge \theta^2_{\phi_2}\wedge ... \wedge \theta^n_{\phi_n}:= \eta_{\varphi} \wedge \theta^2_{\phi_2}\wedge ... \wedge \theta^n_{\phi_n} -\eta_{\psi}\wedge \theta^2_{\phi_2}\wedge ... \wedge \theta^n_{\phi_n}
\]
is a difference of non-pluripolar products, see Section \ref{sect: preliminary}.

The integration by parts formula is a key ingredient in the variational approach to solve complex Monge-Amp\`ere equations (see \cite{BBGZ13}, \cite{DDL2}). When the potentials have small unbounded locus, i.e. these are locally bounded outside a closed complete pluripolar set, the above result was proved in \cite{BEGZ10}.

The main idea of our proof of Theorem \ref{thm: integration by parts} is as follows. We first start with the simple case where $u=\varphi_1-\varphi_2$, with $\varphi_1,\varphi_2\in \PSH(X,\omega)$, vanishes in some open neighborhood of the pluripolar set $\{\varphi_1=-\infty\}$. In this case the result is a simple consequence of the plurifine locality of non-pluripolar products. For the general case we apply the first step with $\varphi_1$ and $\varphi_{2,\lambda}=\max(\varphi_1,\lambda \varphi_2)$ for $\lambda>1$. We next use the comparison of capacities above to pass to the limit as $\lambda\searrow 1$.

\subsection*{Organization of the note. }

After preparing necessary background materials in Section \ref{sect: preliminary} we systematically compare Monge-Amp\`ere capacities in Section \ref{sec: comparison of MA cap}, proving Theorem \ref{thm: Cap comparison intro}. A new proof of Theorem \ref{thm: integration by parts} is given in Section \ref{sec: integration by parts}.

\section{Preliminaries\label{sect: preliminary}}
In this section we recall necessary notions and tools in pluripotential theory. We refer the reader to \cite{BEGZ10}, \cite{BBGZ13}, \cite{DDL1,DDL2,DDL3,DDL4,DDL5} for more details. 

\subsection{Quasi plurisubharmonic functions}
Let $(X,\omega)$ be a compact K\"ahler manifold of dimension $n$. Fix a closed smooth real $(1,1)$-form $\theta$. A function $u: X\rightarrow\mathbb{R}\cup\{-\infty\}$ is quasi plurisubharmonic (qpsh) if locally $u=\rho+\varphi$ where $\rho$ is smooth and $\varphi$ is plurisubharmonic (psh). If additionally $\theta_u:=\theta+dd^c u\geq 0$ in the weak sense of currents then $u$ is called $\theta$-psh. We let $\PSH(X,\theta)$ denote the set of all $\theta$-psh functions which are not identically $-\infty$. By elementary properties of psh functions one has that $\PSH(X,\theta)\subset L^{1}(X)$. Here, if nothing is stated $L^{1}(X)$ is $L^{1}(X,\omega^{n})$. The De Rham cohomology class $\{\theta\}$ is big if $\PSH(X,\theta-\varepsilon\omega)$ is non-empty for some $\varepsilon>0$. 

Given $u,v\in{\rm PSH(X,\theta)}$ we say that $u$ is more singular than $v$, and denote by $u\preceq v$, if there exists a constant $C$ such that $u\leq v+C$ on $X$. We say that $u$ and $v$ have the same singularity type, and denote by $u\simeq v$, if $u\preceq v$ and $v\preceq u$. There is a natural least singular potential in $\PSH(X,\theta)$ given by
\[
V_{\theta}(x):=\sup\{u(x)\; :\; u\in\PSH(X,\theta),\; u\leq 0 \; \text{on}\; X\}.
\]
As is well-known $V_{\theta}$ is locally bounded in a Zariski open set called the ample locus of $\{\theta\}$. A potential $u\in\PSH(X,\theta)$ has minimal singularity type if it has the same singularity type as $V_{\theta}.$ Note that $V_{\theta}\equiv 0$ if and only if $\theta\geq 0$.

Let $\theta^1,...,\theta^n$ be smooth closed $(1,1)$-forms representing big cohomology classes. Given $u_j\in\PSH(X,\theta^j)$, $j=1,...,n$, with minimal singularity type the Monge-Amp\`ere measure
\[
(\theta^1+dd^{c}u_{1})\wedge...\wedge(\theta^n+dd^{c}u_{n})
\]
is well defined, by Bedford-Taylor \cite{BT76,BT82}, as a positive Borel measure on the intersection of the ample locus of $\{\theta^j\}$ with finite total mass. One extends this measure trivially over $X$, the resulting measure is called the non-pluripolar Monge-Amp\`ere product of $u_1,...,u_n$. For general $u_{j}\in\PSH(X,\theta^j)$ one can consider the canonical approximants $u_{j}^{t}:=\max(u_{j},V_{\theta^j}-t)$, $t>0$, $j=1,...,n$. The sequence of measures 
\[
{\bf 1}_{\cap\{u_{j}>V_{\theta^j}-t\}}(\theta^1+dd^{c}u_{1}^{t})\wedge...\wedge(\theta^n+dd^{c}u_{n}^{t})
\]
is increasing in $t$. Its strong limit, denoted by $(\theta^1+dd^{c}u_{1})\wedge...\wedge(\theta^n+dd^{c}u_{n})$, is a positive Borel measure on $X$. To simplify the notation we also denote the latter by $\theta^1_{u_1}\wedge .... \wedge \theta^n_{u_n}$. When $u_1=...=u_n$ and $\theta^1=...=\theta^n=\theta$ we obtain the non-pluripolar Monge-Amp\`ere measure of $u$, denoted by $(\theta+dd^c u)^n$ or simply by $\theta_u^n$.

We let $\mathcal{E}(X,\theta)$ denote the set of all $\theta$-psh functions $u$ with full Monge-Amp\`ere mass, i.e. such that $\int_X (\theta +dd^c u)^n =\int_X (\theta+dd^c V_{\theta})^n=\vol(\theta)$.

\subsection{Monotonicity of Monge-Amp\`ere mass}
\begin{lemma}
Assume that $u,v\in\PSH(X,\theta)$ have the same singularity type. Then $\int_{X}\theta_{u}^{n}=\int_{X}\theta_{v}^{n}$. 
\end{lemma}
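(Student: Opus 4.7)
The plan is to reduce this equality to the monotonicity of non-pluripolar Monge-Amp\`ere masses under the singularity partial order $\preceq$. Since $u\simeq v$ means both $u\preceq v$ and $v\preceq u$, it suffices to invoke the one-sided statement: if $u\preceq v$ in $\PSH(X,\theta)$, then $\int_X \theta_u^n\leq \int_X \theta_v^n$. This monotonicity was established by Witt-Nystr\"om \cite{WN19}, with alternative arguments in the Darvas--Di Nezza--Lu series \cite{DDL1}. Applied with the roles of $u$ and $v$ swapped, it immediately yields the desired equality, so the lemma is a two-line consequence if one takes the monotonicity theorem as a black box.

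To sketch the monotonicity itself, I would work with the canonical approximants $u^t:=\max(u,V_\theta-t)$ and $v^t:=\max(v,V_\theta-t)$. These have minimal singularity type, so $\int_X \theta_{u^t}^n=\int_X \theta_{v^t}^n=\vol(\theta)$; plurifine locality identifies $\theta_{u^t}^n$ with $\theta_u^n$ on the plurifine-open set $\{u>V_\theta-t\}$, and by construction $\int_X \theta_u^n$ is the increasing limit of $\int_{\{u>V_\theta-t\}}\theta_u^n$ as $t\to\infty$. Fixing a constant $C$ with $u\leq v+C$, the set inclusion $\{u>V_\theta-t\}\subset\{v>V_\theta-t-C\}$ is the key set-theoretic ingredient that lets one compare the truncated integrals and pass to the limit.

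The main obstacle will be controlling the Monge-Amp\`ere mass that the $\max$-operation concentrates on the shock loci $\{u=V_\theta-t\}$: this concentrated mass is precisely what accounts for the possible defect $\vol(\theta)-\int_X\theta_u^n$, so plurifine locality away from the shock is not enough on its own. Tracking how this shock mass propagates as $t\to\infty$ and showing it is dominated by the analogous quantity for $v$ is the heart of the Witt-Nystr\"om argument. For this short preliminary lemma the cleanest route is therefore to cite the monotonicity theorem and deduce the equality from the two inequalities it provides.
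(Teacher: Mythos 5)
Your reduction of the equality to the two one-sided monotonicity inequalities is logically fine, and citing \cite{WN19} does settle the statement as such; but the whole point of this lemma in the paper is to give a \emph{new, direct} proof of exactly that result (the paper lists the existing proofs of \cite{WN19}, \cite{LN19}, \cite{Vu20} immediately before giving its own), so a proof by citation does not supply the content the lemma is meant to provide. More importantly, the sketch you offer of the monotonicity argument stalls at precisely the point you yourself flag: with the canonical approximants $u^{t}$, $v^{t}$ there is no way to compare $\int_{\{u>V_{\theta}-t\}}\theta_{u}^{n}$ with $\int_{\{v>V_{\theta}-t-C\}}\theta_{v}^{n}$ directly, since these are integrals of two \emph{different} measures and the inclusion of the underlying sets by itself gives nothing; the mass concentrated by the truncation on $\{u=V_{\theta}-t\}$ is exactly the quantity you cannot control. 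So as a self-contained argument the proposal has a genuine gap.

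The paper closes this gap by avoiding the head-on comparison altogether. Step one treats the special case where $u=v$ on an open neighborhood $U=\{\min(u,v)<-C\}$ of the polar set: for $t>C$ one has $u^{t}=v^{t}$ on $U$ and $\{u\leq-t\}=\{v\leq-t\}\subset U$, so plurifine locality together with $\int_{X}\theta_{u^{t}}^{n}=\int_{X}\theta_{v^{t}}^{n}=\vol(\theta)$ yields the exact identity $\int_{\{u>-t\}}\theta_{u}^{n}=\int_{\{v>-t\}}\theta_{v}^{n}$ with no uncontrolled boundary term, and one lets $t\to+\infty$. The general case (normalize $v\leq u\leq v+B$) is then reduced to this one by comparing $v_{a}:=av$ with $u_{a}:=\max(u,av)$ for $a\in(0,1)$: these agree on $\{av<-C\}$ for $C$ large, so the special case gives $\int_{X}\theta_{u_{a}}^{n}=\int_{X}\theta_{v_{a}}^{n}$; multilinearity gives $\int_{X}\theta_{v_{a}}^{n}\to\int_{X}\theta_{v}^{n}$ as $a\nearrow1$, while $u_{a}\searrow u$ and the semicontinuity of the mass under decreasing limits give $\liminf_{a}\int_{X}\theta_{u_{a}}^{n}\geq\int_{X}\theta_{u}^{n}$, whence $\int_{X}\theta_{u}^{n}\leq\int_{X}\theta_{v}^{n}$; symmetry concludes the K\"ahler case, and the big case follows from the polynomial identity in $t$ obtained by replacing $\theta$ with $\theta+t\omega$. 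If you want your argument to stand on its own, you need this detour through $\max(u,av)$ (or an equivalent device); otherwise what you have is a citation, not a proof.
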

The above result was first proved by D. Witt-Nystr\"om \cite{WN19}. A different proof has been recently given in \cite{LN19} using the monotonicity of the energy functional. We give below a direct proof using a standard approximation process. Another different proof has been recently given in \cite{Vu20} where generalized non-pluripolar products of positive currents are studied.

We also stress that our proof only uses the invariant of the Monge-Amp\`ere mass of bounded $\omega$-psh functions. It is thus valid on non-K\"ahler manifolds $(X,\omega)$ satisfying 
\[
\int_{X}(\omega+dd^{c}u)^{n}=\int_{X}\omega^{n},\quad\forall
u\in\PSH(X,\omega)\cap C^{\infty}(X).
\]
As shown in \cite{Chio16} the above condition is equivalent to $i\partial\bar{\partial}\omega^{k}=0$, for all $k=1,...,n-1$. 
\begin{proof}
{\bf Step 1.}  Assume that $\theta$ is a K\"ahler form.

We first prove the following claim: if there exists a constant $C>0$ such that $u=v$ on $U:=\{\min(u,v)<-C\}$ then $\int_{X}\theta_{u}^{n} = \int_{X}\theta_{v}^{n}$.

We approximate $u,v$ by $u^{t}:=\max(u,-t)$ and $v^{t}:=\max(v,-t)$. Then, for $t>C$ we have that $u^{t}=v^{t}$ on the open set $U$. This results in ${\bf 1}_{U}\theta_{u^{t}}^{n}={\bf 1}_{U}\theta_{v^{t}}^{n}$. Observe that $u^{t}=u$ on $\{u>-t\}$ and $v^{t}=v$ on $\{v>-t\}$. For $t>C$ we have $\{u\leq-t\}=\{v\leq-t\}\subset U$. Hence by plurifine locality we have 
\[
\theta_{u^{t}}^{n}={\bf 1}_{\{u>-t\}}\theta_{u}^{n}+{\bf 1}_{\{u\leq-t\}}\theta_{u^{t}}^{n}={\bf 1}_{\{u>-t\}}\theta_{u}^{n}+{\bf 1}_{\{v\leq-t\}}\theta_{v^{t}}^{n},
\]
and 
\[
\theta_{v^{t}}^{n}={\bf 1}_{\{v>-t\}}\theta_{u}^{n}+{\bf 1}_{\{v\leq-t\}}\theta_{v^{t}}^{n}={\bf 1}_{\{v>-t\}}\theta_{v}^{n}+{\bf 1}_{\{v\leq-t\}}\theta_{v^{t}}^{n},
\]
Integrating  over $X$ and noting that $\int_{X}\theta_{u^{t}}^{n}=\int_{X}\theta_{v^{t}}^{n}=\vol(\theta)$, we arrive at 
\[
\int_{\{u>-t\}}\theta_{u}^{n}=\int_{\{v>-t\}}\theta_{v}^{n}.
\]
Letting $t\to+\infty$ we prove the claim.

We now come back to the proof of the lemma in the K\"ahler case. We can assume that $v\leq u\leq v+B$, for some positive constant $B$. For each $a\in(0,1)$ we set $v_{a}:=av$ and $u_{a}:=\max(u,v_{a})$. Choosing $C>2Ba(1-a)^{-1}$ we see that $u_{a}=v_{a}$ on $\{v_{a}<-C\}$. It follows from the claim that $\int_{X}\theta_{u_{a}}^{n}=\int_{X}\theta_{v_{a}}^{n}$. By multilinearity of the non-pluripolar product we have that $\int_{X}\theta_{v_{a}}^{n}\to\int_{X}\theta_{v}^{n}$ as $a\nearrow 1$. Observe that $u_a\searrow u$  as $a\nearrow1$ hence, by \cite[Theorem 2.3]{DDL2}, 
\[
\liminf_{a\to1^{-}}\int_{X}\theta_{u_{a}}^{n}\geq\int_{X}\theta_u^n.
\]
We thus have $\int_{X}\theta_{u}^{n}\leq\int_{X}\theta_{v}^{n}$. Reversing the role of $u$ and $v$ we finally have $\int_{X}\theta_{u}^{n}=\int_{X}\theta_{v}^{n}$, finishing the proof of Step 1. 

{\bf Step 2.} We treat the general case, $\{\theta\}$ is merely big. Fix $s>0$ so large that $\theta+s\omega$ is K\"ahler. For $t>s$ we have, by the first step, 
\[
\int_{X}(\theta+t\omega+dd^{c}u)^{n}=\int_{X}(\theta+t\omega+dd^{c}v)^{n}.
\]
The multilinearity of the non-pluripolar product then gives, for all $t>s$, 
\[
\sum_{k=0}^{n}\binom{n}{k}\int_{X}\theta_{u}^{k}\wedge\omega^{n-k}t^{n-k}=\sum_{k=0}^{n}\binom{n}{k}\int_{X}\theta_{v}^{k}\wedge\omega^{n-k}t^{n-k}.
\]
We thus obtain an equality between two polynomials and identifying the coefficients we infer the desired equality.
\end{proof}

\subsection{Quasi-psh envelopes and model potentials}
Let $f=f_1-f_2$ be a difference of two quasi-psh functions.  We let $P_{\theta}(f)$ denote the largest $\theta$-psh function on $X$ lying below $f$: 
\[
P_{\theta}(f)(x) := \left( \sup \{u(x)\; :\; u \in
\PSH(X,\theta), \; u\leq f\; \text{on}\ X\}\right)^*.
\]
Here, the inequality $u\leq f$ is understood as $u+f_2\leq f_1$ on $X$. A potential $\phi\in\PSH(X,\theta)$ is called a model potential if $\int_{X}(\theta+dd^{c}\phi)^{n}>0$ and $P_{\theta}[\phi]=\phi$, where $P_{\theta}[\phi]$ is the envelope of singularity type of $\phi$, introduced by J. Ross and D. Witt-Nystr\"om \cite{RWN14}:
\[
P_{\theta}[\phi]:=\left(\lim_{t\to+\infty}P_{\theta}(\min(\phi+t,0))\right)^{*}.
\]
Given a model potential $\phi$ we let $\mathcal{E}(X,\theta,\phi)$ denote the set of $u\in\PSH(X,\theta$) more singular than $\phi$ such that $\int_{X}(\theta+dd^{c}u)^{n}=\int_{X}(\theta+dd^{c}\phi)^{n}$.

\begin{lemma}
\label{lem: envelope two forms}
If $u\in \mathcal{E}(X,\omega)$ then $P_{\theta}(u) \in \mathcal{E}(X,\theta)$.
\end{lemma}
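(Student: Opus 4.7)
The plan is to reduce to the case of bounded $u$ via canonical approximants and then pass to the limit, using the hypothesis $u\in\mathcal{E}(X,\omega)$ to prevent loss of non-pluripolar Monge-Amp\`ere mass.

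\emph{Step 1 (bounded case).} Since $\omega$ is K\"ahler, $V_\omega\equiv 0$, so the canonical approximants $u_t:=\max(u,-t)$ are bounded $\omega$-psh functions decreasing to $u$. Setting $P_t:=P_\theta(u_t)$, one has $P_t\le u_t\le\sup_X u$, and on the ample locus of $\{\theta\}$ (where $V_\theta$ is locally bounded) one has $V_\theta-M_t\le -t\le u_t$ for a suitable $M_t>0$, hence $V_\theta-M_t\le P_t$ since the left-hand side is a competitor in the defining envelope. Thus $P_t$ has the same singularity type as $V_\theta$, and the monotonicity lemma already established in this section gives $\int_X\theta_{P_t}^n=\int_X\theta_{V_\theta}^n=\vol(\theta)$.

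\emph{Step 2 (limit of envelopes).} The envelope is monotone in its argument, so $P_t$ is decreasing in $t$. Its usc regularization is $\theta$-psh and bounded above by $u$, hence at most $P_\theta(u)$; the reverse holds since $P_\theta(u)\le P_t$ for every $t$. I would then conclude $P_t\searrow P_\theta(u)$, and the task is to show that the common mass $\vol(\theta)$ persists in the limit.

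\emph{Step 3 (the main obstacle: no mass loss).} Along decreasing sequences $P_t\searrow P_\theta(u)$ in $\PSH(X,\theta)$, only the inequality $\int_X\theta_{P_\theta(u)}^n\le\liminf_t\int_X\theta_{P_t}^n=\vol(\theta)$ is immediate from general theory; the reverse inequality is the delicate point and where I expect the main difficulty. The plan is to use that $\theta_{P_t}^n$ is carried by the contact set $\{P_t=u_t\}$, and on its plurifine-open intersection with $\{u>-t\}$ one has $P_t=u$, so $\theta+dd^cP_t=(\theta-\omega)+(\omega+dd^cu)$ there. Expanding by multilinearity of the non-pluripolar product, the mass of $\theta_{P_t}^n$ on this set is expressed in terms of mixed products $(\omega+dd^cu)^k\wedge(\theta-\omega)^{n-k}$. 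The hypothesis $u\in\mathcal{E}(X,\omega)$ -- which forces $\omega_u^n$ (and hence the above mixed masses) to put no mass on the pluripolar set $\{u=-\infty\}$ -- should then prevent $\theta_{P_t}^n$ from escaping $\{u>-t\}$ in the limit $t\to+\infty$, yielding $\int_X\theta_{P_\theta(u)}^n\ge\vol(\theta)$ and completing the proof.
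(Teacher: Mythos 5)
Your reduction to the bounded approximants $u_t=\max(u,-t)$ and the identification $P_\theta(u_t)=:P_t\searrow P_\theta(u)$ are reasonable, but Step 3 --- which you rightly identify as the crux --- does not work as sketched, for two distinct reasons. First, the contact set $D_t=\{P_t=u_t\}$ is \emph{not} plurifine open (it is the locus where an envelope touches its obstacle, not a set of the form $\{v<w\}$), so plurifine locality does not let you write $\theta+dd^cP_t=(\theta-\omega)+(\omega+dd^cu)$ as an identity of non-pluripolar products on $D_t\cap\{u>-t\}$. What is actually available there is only the one-sided bound ${\bf 1}_{D_t}(\theta+dd^cP_t)^n\leq {\bf 1}_{D_t}(\omega+dd^cu_t)^n$ (cf. \cite[Lemma 4.5]{DDL5}), and even that requires first normalizing $\theta\leq\omega$ (harmless, since $u\in\mathcal{E}(X,A\omega)$ for all $A\geq1$, but you never do it). Your subsequent ``multilinear expansion'' in $(\theta-\omega)$ is moreover not a legitimate operation: $\theta-\omega$ has no sign, so the mixed terms $(\omega+dd^cu)^k\wedge(\theta-\omega)^{n-k}$ are not positive measures and cannot be controlled by the hypothesis $u\in\mathcal{E}(X,\omega)$. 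Second, even with the corrected inequality, which gives $\int_{\{P_t\leq-s\}}\theta_{P_t}^n\leq\int_{\{u\leq-s\}}\omega_u^n$ for $t>s$ (this does rule out $P_t\searrow-\infty$, a point your Step 2 silently assumes), you are still left with the decreasing-limit mass problem: the general convergence theorem \cite[Theorem 2.3]{DDL2} only yields $\int_X\theta_{P_\theta(u)}^n\leq\liminf_t\int_X\theta_{P_t}^n=\vol(\theta)$, i.e. the wrong direction, and upgrading the uniform smallness of $\theta_{P_t}^n$ on $\{P_t\leq -s\}$ to a statement about the limit potential requires a further argument (quasi-continuity and outer regularity along the decreasing family) that you do not supply.

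The paper sidesteps the decreasing limit entirely. For $b\geq1$ it shows that $v_b:=P_\theta\bigl(bu+(1-b)V_\theta\bigr)$ is a genuine $\theta$-psh function --- this is where the contact-set inequality and the full-mass hypothesis on $u$ are used, exactly in the corrected form above --- and then exploits the elementary inequality $P_\theta(u)\geq b^{-1}v_b+(1-b^{-1})V_\theta$. Since both summands on the right are $\theta$-psh up to scaling, Witt--Nystr\"om monotonicity and multilinearity of non-pluripolar products of \emph{positive} currents give $\int_X\theta_{P_\theta(u)}^n\geq(1-b^{-1})^n\vol(\theta)$, and letting $b\to+\infty$ concludes. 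To repair your argument you should either adopt this convexity trick or honestly carry out the limiting argument for $P_t\searrow P_\theta(u)$; in either case the inequality ${\bf 1}_{D_t}\theta_{P_t}^n\leq{\bf 1}_{D_t}\omega_{u_t}^n$ is the correct replacement for your Step 3 computation.
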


\begin{proof}
Since $u\in \mathcal{E}(X,A\omega)$ for any $A\geq 1$, we can assume that $\omega\geq \theta$.

We first claim that, for all $b\geq 1$, $P_{\theta}(bu+(1-b)V_{\theta})\in \PSH(X,\theta)$. Indeed, set $u_j=\max(u,-j)$, $v_j:= P_{\theta}(bu_j+ (1-b)V_{\theta})$, and
\[
D:= \{v_j = bu_j+(1-b)V_{\theta}\},\; \varphi_j :=
b^{-1}v_j+(1-b^{-1})V_{\theta}.
\]
Since $\varphi_j\leq u_j$ with equality on $D$, using \cite[Lemma 4.5]{DDL5} we have
\begin{align*}
{\bf 1}_D (\theta + dd^c \varphi_j)^n \leq {\bf 1}_D (\omega +
dd^c \varphi_j)^n \leq {\bf 1}_D (\omega+dd^c u_j)^n.
\end{align*}
We choose $t>0$ so large that $b^n\int_{\{u\leq b^{-1}t\}} (\omega+dd^c u)^n < \vol(\theta)$. For $j>b^{-1}t$, by plurifine locality, we have
\begin{align*}
b^n \int_{\{u_j\leq -b^{-1}t\}} (\omega +dd^c u_j)^n &= b^n
\int_X (\omega+dd^c u_j)^n - b^n \int_{\{u >-b^{-1}t\}}
(\omega+dd^c u_j)^n\\
&=b^n \int_X (\omega+dd^c u)^n - b^n \int_{\{u >-b^{-1}t\}}
(\omega+dd^c u)^n\\
& = b^n \int_{\{u \leq -b^{-1}t\}} (\omega+dd^c u)^n <
\vol(\theta).
\end{align*}
By \cite[Lemma 4.4]{DDL5} we have that $(\theta+dd^c v_j)^n$ is supported on $D$, hence
\begin{align*}
\int_{\{v_j\leq -t\}} (\theta+dd^c v_j)^n & = \int_{\{v_j\leq
-t\} \cap D} (\theta+dd^c v_j)^n \leq b^n \int_{\{v_j\leq -t\} \cap D} (\theta +dd^c
\varphi_j)^n \\
 & \leq b^n \int_{\{u_j\leq -b^{-1}t\}}  (\omega +dd^c u_j)^n < \vol(\theta).
\end{align*}
It thus follows that $\sup_X v_j > -t$, hence $v_j \searrow v \in \PSH(X,\theta)$, proving the claim.

Observe also that $P_{\theta}(u) \geq b^{-1} P_{\theta}(bu+(1-b)V_{\theta}) +(1-b^{-1})V_{\theta}$. It thus follows from \cite{WN19} and muntilinearity of the non-pluripolar product that
\[
\int_X (\theta +dd^c P_{\theta}(u))^n \geq (1-b^{-1})^n
\vol(\theta) \to \vol(\theta)
\]
as $b\to +\infty$. This proves that $P_{\theta}(u) \in \mathcal{E}(X,\theta)$.
\end{proof}

\subsection{Monge-Amp\`ere capacities}
Fix a $\theta$-psh function $\psi \leq 0$. We define, for each Borel set $E\subset X$, 
\[
\capa_{\theta,\psi}(E):=\sup\left\{ \int_{E}\theta_{u}^{n}\ :\
u\in\PSH(X,\theta),\ \psi-1\leq u\leq\psi\right\} .
\]
Given a Borel subset $E$, the global $\phi$-extremal function is defined by 
\[
V_{E,\theta,\phi}(x):=\sup\{v(x) \; : \; v\in\PSH(X,\theta),\;
v\preceq\phi,\ v\leq\phi\ \text{on}\ E\}, x\in X.
\]
It was shown in \cite{DDL2,DDL4}, when $\phi$ is a model potential and $E$ is non pluripolar, that $V_{E,\theta,\phi}^{*}$ is a $\theta$-psh function having the same singularity type as $\phi$. Moreover $V_{E,\theta,\phi}^{*}=\phi$ on $E$ modulo a pluripolar set. We set $M_{E,\theta,\phi}:=\sup_{X}V_{E,\theta,\phi}^*$. In case when $\phi=V_{\theta}$ we will simplify the notation by setting $V_{E,\theta}:=V_{E,\theta,\phi}$ and $M_{E,\theta}:=M_{E,\theta,\phi}$.
\begin{lemma}
\label{lem: modulo pluripolar} 
 Let $\phi\in\PSH(X,\theta)$ be such that $\int_{X}\theta_{\phi}^{n}>0$. If $E\subset X$ is a Borel se and $P\subset X$ is a pluripolar set then $V_{E\cup P,\theta,\phi}^{*}=V_{E,\theta,\phi}^{*}$.
\end{lemma}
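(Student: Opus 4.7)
The inequality $V^*_{E\cup P,\theta,\phi}\le V^*_{E,\theta,\phi}$ is immediate: the defining family of the left-hand envelope is a subfamily of that of the right-hand one, since the constraint $v\le\phi$ is imposed on the larger set $E\cup P\supset E$. All the content lies in the reverse direction.

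The plan is a convex-combination perturbation via an auxiliary $\theta$-psh compensator $\rho$ that is identically $-\infty$ on $P$. By the quasi-psh version of Josefson's theorem, pick $\psi\in\PSH(X,\omega)$ with $\psi\le 0$ and $\psi\equiv-\infty$ on $P$; since $\int_X\theta_\phi^n>0$ forces $\{\theta\}$ to be big, fix $\psi_0\in\PSH(X,\theta)$ with $\psi_0\le 0$ and $\theta+dd^c\psi_0\ge\varepsilon_0\omega$ for some $\varepsilon_0>0$. The function $\rho_0:=\psi_0+\varepsilon_0\psi$ then lies in $\PSH(X,\theta)$, is $\le 0$, and is $\equiv-\infty$ on $P$; a further adjustment upgrades $\rho_0$ to $\rho$ satisfying also $\rho\preceq\phi$ and $\rho\le\phi$ pointwise. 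Given such $\rho$, for each candidate $v$ of $V_{E,\theta,\phi}$ set $\tilde v_\varepsilon:=(1-\varepsilon)v+\varepsilon\rho$. This is $\theta$-psh (convex combination), satisfies $\tilde v_\varepsilon\preceq\phi$ (both ingredients are), and is $\equiv-\infty$ on $P$, so $\le\phi$ there; on $E$, combining $v\le\phi$ with $\rho\le\phi$ yields $\tilde v_\varepsilon\le(1-\varepsilon)\phi+\varepsilon\phi=\phi$. Hence $\tilde v_\varepsilon\le V^*_{E\cup P,\theta,\phi}$, and as $\varepsilon\searrow 0$, $\tilde v_\varepsilon\to v$ pointwise off the pluripolar set $\{\rho=-\infty\}$; taking the supremum over $v$ followed by the usc regularization yields $V^*_{E,\theta,\phi}\le V^*_{E\cup P,\theta,\phi}$.

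The main obstacle I anticipate is the construction of $\rho$ with $\rho\le\phi$ pointwise: when $\phi$ is unbounded below the bounded compensator $\rho_0$ exceeds $\phi$ on $\{\phi\ll 0\}$, and no constant shift $\rho_0-K$ remedies this, since $\rho_0-\phi$ need not be bounded above. I would exploit $\phi$ itself in the construction, for instance through the $\theta$-psh projection $P_\theta\bigl(\min(\phi,\rho_0+C)\bigr)$ for $C$ large, arguing nontriviality from $\int_X\theta_\phi^n>0$ (in the spirit of Lemma \ref{lem: envelope two forms}), or via a convex-combination / subtraction procedure that tames the singularity of $\rho_0$ to match that of $\phi$.
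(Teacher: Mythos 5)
Your overall strategy coincides with the paper's: both reduce the lemma to producing a single auxiliary $\theta$-psh function $\rho$ with $\rho\le\phi$ on $X$ and $\rho\equiv-\infty$ on $P$, and then perturb each candidate $v$ by the convex combination $(1-\varepsilon)v+\varepsilon\rho$. That part of your argument is fine (modulo the harmless point that $\tilde v_\varepsilon\le V^*_{E\cup P,\theta,\phi}$ first gives $v\le V^*_{E\cup P,\theta,\phi}$ only off a pluripolar set, which upgrades to an inequality everywhere since both sides are quasi-psh). The gap sits exactly where you anticipate it, and the fixes you sketch do not close it. The compensator $\rho_0=\psi_0+\varepsilon_0\psi$ built from Josefson's theorem carries no control on its non-pluripolar Monge-Amp\`ere mass, and the envelope $P_{\theta}(\min(\phi,\rho_0+C))$ can genuinely be identically $-\infty$ even though $\int_X\theta_\phi^n>0$: on $\mathbb{P}^1$ with $\theta=\omega$ the Fubini--Study form, take $\phi$ with a logarithmic pole of coefficient $1/2$ at $p$ (so its non-pluripolar mass is $1/2>0$) and $\rho_0$ with a pole of coefficient $3/4$ at $q\in P$; any $\omega$-psh function below $\min(\phi,\rho_0)$ would need total Lelong number at least $5/4>\int_X\omega$, so the envelope is trivial. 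Positivity of the mass of $\phi$ alone is therefore not enough; one needs a joint mass condition on $\phi$ and the compensator.

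The paper supplies precisely this missing ingredient by choosing the compensator in the full mass class: by Guedj--Zeriahi (a strengthening of Josefson) every pluripolar set is contained in $\{v_0=-\infty\}$ for some $v_0\in\mathcal{E}(X,\omega)$; Lemma \ref{lem: envelope two forms} then gives $P_{\theta}(v_0)\in\mathcal{E}(X,\theta)$; and \cite[Lemma 5.1]{DDL5}, which requires $\int_X\theta_{\phi}^n+\int_X(\theta+dd^cP_{\theta}(v_0))^n>\vol(\theta)$ and is applicable here because the second mass equals $\vol(\theta)$ while the first is positive, guarantees that $\rho:=P_{\theta}(\min(\phi,v_0))$ is a genuine $\theta$-psh function with $\rho\le\phi$ and $P\subset\{\rho=-\infty\}$. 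So the step you need, and which your proposal leaves open, is to take the auxiliary potential with full Monge--Amp\`ere mass rather than an arbitrary one vanishing to $-\infty$ on $P$.
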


\begin{proof}
It follows from the definition that $V_{E,\theta,\phi}\geq V_{E\cup P,\theta,\phi}$ since $E\subset E\cup P$. Let now $u\in\PSH(X,\theta)$ be a candidate defining $V_{E,\theta,\phi}$. We claim that there exists $v\in \PSH(X,\theta)$ such that $v\leq\phi$ and $P\subset\{v=-\infty\}$. Indeed, it follows from \cite{GZ05,GZ07} that there exists $v_0\in \mathcal{E}(X,\omega)$ such that $P\subset \{v_0=-\infty\}$. By Lemma \ref{lem: envelope two forms} we have $P_{\theta}(v_0) \in \mathcal{E}(X,\theta)$, hence \cite[Lemma 5.1]{DDL5} ensures that $v:=P_{\theta}(\min(\phi,v_0)) \in \PSH(X,\theta)$. Note also that $P\subset \{v=-\infty\}$ and $v\leq \phi$. This proves the claim.

For each $\lambda\in(0,1)$ the function $u_{\lambda}:=\lambda v+(1-\lambda)u$ is $\theta$-psh and satisfies $u_{\lambda}\preceq\phi$, $u_{\lambda}\leq\phi$ on $E\cup P$. We thus have $u_{\lambda}\leq V_{E\cup P,\theta,\phi}^{*}$. Letting $\lambda\to0^{+}$ we obtain $u\leq V_{E\cup P,\theta,\phi}^*$ on $X$ modulo a pluripolar set, hence on the whole $X$. This finally gives $V_{E,\theta,\phi}^{*}\leq V_{E\cup P,\theta,\phi}^{*}$. 
\end{proof}
\begin{proposition}
If $\psi\in\PSH(X,\theta)$ satisfies $\int_{X}(\theta+dd^{c}\psi)^{n}>0$ then $\capa_{\theta,\psi}$ characterizes pluripolar sets: for all Borel sets $E$ we have 
\[
\capa_{\theta,\psi}(E)=0\Longleftrightarrow E\;\text{is
pluripolar. }
\]
\end{proposition}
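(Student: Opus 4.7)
The implication $E$ pluripolar $\Rightarrow\capa_{\theta,\psi}(E)=0$ is immediate, since non-pluripolar Monge-Amp\`ere products charge no pluripolar set, so $\int_E\theta_u^n=0$ for every competitor $u$. For the converse I would argue contrapositively: assuming $E$ is not pluripolar, exhibit $u\in\PSH(X,\theta)$ with $\psi-1\leq u\leq\psi$ and $\int_E\theta_u^n>0$. Two preliminary observations guide the strategy. First, by Choquet capacitability one may restrict to the case where $E$ is compact. Second, the constraints on $u$ force $u\simeq\psi$, so the monotonicity lemma proved earlier guarantees that every competitor has the same total mass $\int_X\theta_u^n=\int_X\theta_\psi^n>0$; our task is only to redistribute this fixed mass so that some of it lands on~$E$.

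The natural tool is the relative $\psi$-extremal function
\[
h_E:=\sup\{v\in\PSH(X,\theta):v\leq\psi \text{ on } X,\ v\leq\psi-1 \text{ on } E\},
\]
whose usc regularization $U:=h_E^*$ lies in $[\psi-1,\psi]$ and is thus an admissible competitor. By Choquet's negligibility, $\{h_E<U\}$ is pluripolar, hence $U=\psi-1$ on $E$ modulo a pluripolar set; moreover, if we had $U\equiv\psi$ then $E\subseteq\{h_E<U\}$ would be pluripolar, contradicting the hypothesis, so $U\not\equiv\psi$. What remains---and this is the crux---is to establish $\int_E\theta_U^n>0$. This is the analog in the big setting, with a possibly singular reference $\psi$, of the classical Bedford-Taylor statement that the equilibrium measure of a non-pluripolar compact set has positive total mass. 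I would prove it by passing to the model envelope $\phi:=P_\theta[\psi]$, which is a model potential with $\int_X\theta_\phi^n\geq\int_X\theta_\psi^n>0$: the quoted results of \cite{DDL2,DDL4} then provide the extremal function $V:=V_{E,\theta,\phi}^*$ with the same singularity type as $\phi$, satisfying $V=\phi$ on $E$ modulo a pluripolar set and carrying Monge-Amp\`ere mass $\int_X\theta_\phi^n>0$ concentrated on $\{V=\phi\}$. One then transfers a fraction of this positive mass to a competitor for $\capa_{\theta,\psi}(E)$ via a careful max-construction that stays inside the strip $[\psi-1,\psi]$ while preserving plurifine-open contact with $E$.

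I expect the main obstacle to be precisely this transfer step. The extremal function $V$ has the singularity type of $\phi=P_\theta[\psi]$, which is in general strictly less singular than $\psi$, so $V-\psi$ is not bounded above and neither convex combinations $\lambda V+(1-\lambda)\psi$ nor naive translations of $V$ will fit into the strip $[\psi-1,\psi]$. Overcoming this will require using the plurifine locality of the non-pluripolar product to restrict attention to the plurifine-open set where $\phi$ and $\psi$ are comparable, possibly combined with a local balayage argument inside a coordinate chart contained in the ample locus of $\{\theta\}$ and a Kähler-current representative $\xi$ of $\{\theta\}$. The remaining ingredients---negligibility, the monotonicity of Monge-Amp\`ere mass, and plurifine locality---are all supplied by the preliminaries above.
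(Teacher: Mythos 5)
Your forward implication and the reduction to a compact non-pluripolar $K\subset E$ are fine, and your observation that every competitor $u$ with $\psi-1\le u\le\psi$ has total mass $\int_X\theta_u^n=\int_X\theta_\psi^n>0$ is correct. But the proof has a genuine gap, and you have located it yourself: everything hinges on showing that some admissible competitor puts positive mass on $E$, and neither of your two routes closes this. For the relative extremal function $U=h_E^*$ you only establish $U\not\equiv\psi$ and $U=\psi-1$ on $E$ modulo a pluripolar set; the classical Bedford--Taylor identity ``capacity of $K$ equals the mass of $(dd^cu_K^*)^n$ on $K$'' is not available in this relative, possibly very singular setting, and you do not prove $\int_E\theta_U^n>0$. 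For the second route, you correctly note that $V_{E,\theta,\phi}^*$ with $\phi=P_\theta[\psi]$ has the singularity type of $\phi$, which is in general strictly less singular than $\psi$, so it cannot be squeezed into the strip $[\psi-1,\psi]$; the ``careful max-construction'' and ``local balayage'' you invoke to fix this are not carried out, and it is not clear they can be in this generality. As written, the proposal is a plan with the decisive step missing.

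The paper resolves exactly this difficulty by a different device. It takes the ordinary global extremal function $V_{K,\theta}^*$ (minimal singularity type, with $\theta_{V_{K,\theta}^*}^n$ supported on $K$) and forms the envelope $u_t:=P_\theta\bigl(\min(\psi+t,V_{K,\theta}^*)\bigr)$. Since $u_t$ has the same singularity type as $\psi$, it fits (after translating and rescaling into the strip, as in the claim inside Lemma \ref{lem: GLZ}) among the competitors for $\capa_{\theta,\psi}(K)$. The key input is \cite[Lemma 3.7]{DDL2}: the Monge--Amp\`ere measure of such an envelope is concentrated on the contact set and dominated there by $\mathbf{1}_{\{u_t=\psi+t\}}\theta_\psi^n+\mathbf{1}_{\{u_t=V_{K,\theta}^*\}}\theta_{V_{K,\theta}^*}^n$, whence
\[
0<\int_X\theta_\psi^n=\int_X\theta_{u_t}^n\le\int_{\{u_t=\psi+t\}}\theta_\psi^n+\int_K\theta_{u_t}^n .
\]
As $t\to+\infty$ the sets $\{u_t=\psi+t\}$ shrink into $\{\psi=-\infty\}$, which $\theta_\psi^n$ does not charge, so the first term tends to $0$ and $\int_K\theta_{u_t}^n>0$ for $t$ large. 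If you want to salvage your approach, this envelope-of-a-minimum trick is the ingredient to import; without it (or an equivalent), the argument does not go through.
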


The proof below is quasi identical to that of \cite[Lemma 4.3]{DDL2}.
\begin{proof}
If $E$ is pluripolar then by definition $\capa_{\theta,\psi}(E)=0$. Conversely, assume that $E$ is non pluripolar. Then there exists a compact set $K\subset E$ such that $K$ is non pluripolar. 

Let $V_{K,\theta}$ be the global extremal $\theta$-psh function of $K$. Then $V_{K,\theta}^{*}\in\PSH(X,\theta)$ has minimal singularity type. For $t>0$ we set 
\[
u_{t}:=P_{\theta}(\min(\psi+t,V_{K}^{*})).
\]
It is well known that $\theta_{V_{K,\theta}^*}^{n}$ is supported on $K$. By \cite[Lemma 3.7]{DDL2} 
\[
0<\int_{X}\theta_{\psi}^{n}=\int_{X}\theta_{u_{t}}^{n}\leq\int_{\{u_{t}=\psi+t\}}\theta_{\psi}^{n}+\int_{K}\theta_{u_{t}}^{n}.
\]
The first term on the right-hand side converges to $0$ as $t\to+\infty$. Thus for $t>1$ large enough we have $\int_{K}\theta_{u_{t}}^{n}>0$, hence $\capa_{\theta,\psi}(K)>0$. 
\end{proof}

A sequence of functions $u_j$ converges in capacity to $u$ if, for any $\varepsilon>0$,
\[
\lim_{j\to +\infty} \capa_{\omega}(\{x\in X\; :\;
|u_j(x)-u(x)|>\varepsilon\}) =0.
\]
We will also need the following convergence result whose proof is quasi identical to the proof of \cite[Corollary 2.9]{DDL1}:
\begin{theorem}
\label{thm: convergence}
Assume that $\mu_j$ is a sequence of positive Borel measures converging weakly to $\mu$. Assume that there exists a continuous function $f: [0,+\infty) \rightarrow [0,+\infty)$ with $f(0)=0$ such that, for any Borel set $E$,
\[
\mu_j (E) + \mu(E) \leq f \left(\capa_{\omega} (E)\right).
\]
Let $u_j$ be a sequence of uniformly bounded quasi-continuous function which converges in capacity to $u$. Then $u_j \mu_j \rightarrow u \mu$ in the sense of measures on $X$. 
\end{theorem}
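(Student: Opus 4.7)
The plan is to reduce to testing against an arbitrary $\chi \in C^0(X)$, i.e.\ to showing that $\int_X \chi u_j \, d\mu_j \to \int_X \chi u \, d\mu$. Let $M$ be a uniform bound for $|u_j|$, so that $|u| \leq M$ almost everywhere. A preliminary observation is that $u$ itself is quasi-continuous: extracting a subsequence $(u_{j_k})$ with $\capa_\omega(\{|u_{j_k} - u|>2^{-k}\}) < 2^{-k}$ and combining the tail union with the quasi-continuity open sets of each $u_{j_k}$, one produces an open set of arbitrarily small capacity outside of which $u$ is a uniform limit of continuous functions, hence continuous on its complement.

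The key decomposition is
\[
\int_X \chi u_j \, d\mu_j - \int_X \chi u \, d\mu = \int_X \chi (u_j - u) \, d\mu_j + \int_X \chi u \, d(\mu_j - \mu),
\]
and the two pieces will be handled separately. For the first piece, fix $\delta>0$ and set $F_j := \{|u_j - u| > \delta\}$. Convergence in capacity gives $\capa_\omega(F_j) \to 0$, and splitting the integral over $F_j$ and its complement yields
\[
\left|\int_X \chi(u_j-u)\, d\mu_j\right| \leq \delta\|\chi\|_\infty f\bigl(\capa_\omega(X)\bigr) + 2M \|\chi\|_\infty f\bigl(\capa_\omega(F_j)\bigr),
\]
using $\mu_j(X)\leq f(\capa_\omega(X))<\infty$ from the hypothesis. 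Continuity of $f$ at $0$ gives $\limsup_j|\cdot| \leq \delta\|\chi\|_\infty f(\capa_\omega(X))$, and sending $\delta\to 0^+$ kills this term.

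For the second piece, I would exploit the quasi-continuity of $u$: for each $\varepsilon>0$, pick an open set $U_\varepsilon$ with $\capa_\omega(U_\varepsilon) < \varepsilon$ such that $u|_{X\setminus U_\varepsilon}$ is continuous, then extend this restriction via Tietze to $w \in C^0(X)$ with $\|w\|_\infty \leq M$. Writing
\[
\int_X \chi u \, d(\mu_j - \mu) = \int_X \chi(u-w)\, d\mu_j + \int_X \chi w \, d(\mu_j - \mu) + \int_X \chi(w-u) \, d\mu,
\]
the outer two integrals are each bounded in absolute value by $2M\|\chi\|_\infty f(\varepsilon)$, since $u-w$ is supported in $U_\varepsilon$ and $\mu_j(U_\varepsilon)+\mu(U_\varepsilon)\leq f(\varepsilon)$. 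The middle integral tends to $0$ as $j\to\infty$ by the weak convergence $\mu_j \to \mu$, because $\chi w$ is continuous. Taking $\limsup_j$ and then $\varepsilon\to 0^+$ finishes the argument.

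The main obstacle is really the verification of quasi-continuity of the limit $u$, which requires subadditivity of $\capa_\omega$ under countable unions together with a diagonal extraction; once this is in hand, the rest is a clean bookkeeping of the two parameters $\delta$ and $\varepsilon$, invoking the hypothesis to dominate $\mu_j$-mass by capacity exactly where $u$ and $u_j$ are badly behaved, and invoking weak convergence where everything is continuous.
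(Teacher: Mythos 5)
Your proof is correct and follows essentially the same route as the paper's: replace $u$ by a continuous function off a set of small capacity, handle $\int\chi(u_j-u)\,d\mu_j$ via convergence in capacity plus the domination $\mu_j(E)\leq f(\capa_\omega(E))$, and let weak convergence deal with the remaining continuous integrand. The only difference is that you explicitly justify the quasi-continuity of the limit $u$ (via countable subadditivity of $\capa_\omega$ and a diagonal extraction), a point the paper's proof takes for granted.
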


\begin{proof}
Fixing $\varepsilon>0$ there exist $v,v_j$ continuous functions on $X$ such that
\[
\capa_{\omega}(\{x\in X\; : \; u_j(x) \neq v_j(x) \; \text{or}\;
u(x)\neq v(x)\}) < \varepsilon.
\]
Let $A>0$ be a constant such that $|u_j|+|v_j|+|u|+|v|\leq A$ on $X$. Fix $\delta>0$. For $j>N$ large enough we have, by the assumption that $u_j$ converges in capacity to $u$, that
\[
\capa_{\omega} (\{x\in X\; : \; |u_j(x) -u(x)|>\delta ) <
\varepsilon.
\]
Fixing a continuous function $\chi$, it follows from the above that
\begin{align*}
&|\int_X (\chi u_j \mu_j - \chi u d\mu) | \leq \int_X |\chi
(u_j-u) | \mu_j + |\int_X \chi u (\mu_j -\mu) | \\
&\leq \delta \int_X |\chi| \mu_j + C\sup_X |\chi| O(\varepsilon)
+ |\int_X \chi (u-v) (\mu_j-\mu)| + |\int_X \chi v
(\mu_j-\mu)|\\
&\leq \delta \int_X |\chi| \mu_j + 2C\sup_X |\chi|
O(\varepsilon) + |\int_X \chi v (\mu_j-\mu)|.
\end{align*}
Since $v$ is continuous on $X$ the last term converges to $0$ as $j\to +\infty$. This completes the proof.
\end{proof}

\section{Comparison of Monge-Amp\`ere capacities\label{sec: comparison of MA cap}}
In this section we establish a comparison between Monge-Amp\`ere capacities. We first prove a version of the Chern-Levine-Nirenberg inequality.
\begin{lemma}
\label{lem: CLN} Assume that $u,v\in\PSH(X,\omega)$ have the same singularity type, $v\leq u\leq v+B$, and $\psi$ is a bounded $\omega$-psh function. Then 
\[
\int_{X}\psi\omega_{u}^{n}\geq\int_{X}\psi\omega_{v}^{n}-nB\int_{X}\omega^{n}.
\]
\end{lemma}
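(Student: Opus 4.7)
My plan is to reduce to the classical Bedford-Taylor calculus by approximating $u,v$ by the canonical bounded truncations $u_t:=\max(u,-t)$, $v_t:=\max(v,-t)$, and then pass to the limit $t\to+\infty$ using plurifine locality. The key simplification is that the assumption $v\leq u\leq v+B$ forces $\{u=-\infty\}=\{v=-\infty\}$: if $v(x)=-\infty$ then $u(x)\leq v(x)+B=-\infty$, and conversely $v\leq u$.

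For the bounded step I first verify that $0\leq u_t-v_t\leq B$ everywhere (a case-split according to whether $u,v$ lie above or below $-t$). Using the telescoping identity
\[
\omega_{u_t}^n - \omega_{v_t}^n = dd^c(u_t-v_t) \wedge T_t, \qquad T_t := \sum_{k=0}^{n-1} \omega_{u_t}^k \wedge \omega_{v_t}^{n-1-k},
\]
where $T_t$ is a closed positive Bedford-Taylor current, I pair with $\psi$ and integrate by parts (standard on compact $X$ for bounded potentials) to obtain
\[
\int_X \psi(\omega_{u_t}^n - \omega_{v_t}^n) = \int_X (u_t-v_t)\,\omega_\psi \wedge T_t - \int_X (u_t-v_t)\,\omega \wedge T_t.
\]
The first term is nonnegative since $u_t-v_t\geq 0$ and $\omega_\psi\geq 0$. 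The cohomological identity $\int_X \omega \wedge \omega_{u_t}^k \wedge \omega_{v_t}^{n-1-k} = \int_X \omega^n$, valid for bounded potentials, combined with $u_t-v_t\leq B$ bounds the second term by $nB\int_X \omega^n$. This yields the inequality for the truncations.

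For the passage to the limit, plurifine locality gives
\[
\omega_{u_t}^n = \mathbf{1}_{\{u>-t\}}\omega_u^n + \mathbf{1}_{\{u<-t\}}\omega^n
\]
(and similarly for $v_t$), with the slice $\{u=-t\}$ negligible for generic $t$. Subtracting the analogous decomposition for $v_t$ and using $\{u<-t\}\subset\{v<-t\}$ rewrites the bounded inequality as
\[
\int_{\{u>-t\}} \psi\,\omega_u^n - \int_{\{v>-t\}} \psi\,\omega_v^n - \int_{\{v<-t,\,u\geq -t\}} \psi\,\omega^n \geq -nB\int_X \omega^n.
\]
By dominated convergence (with $\psi$ bounded and $\omega_u^n,\omega_v^n$ non-pluripolar) the first two integrals tend to $\int_X \psi\,\omega_u^n$ and $\int_X \psi\,\omega_v^n$, while the third vanishes because $\{v<-t\}$ shrinks to the pluripolar set $\{v=-\infty\}$, which has zero $\omega^n$-mass. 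The main obstacle I expect is precisely this last step: for general $u\notin \mathcal{E}(X,\omega)$ one does \emph{not} have $\int_X \psi\,\omega_{u_t}^n \to \int_X \psi\,\omega_u^n$, since boundary mass accumulates near $\{u=-\infty\}$. The trick is that in the \emph{difference} $\int\psi(\omega_{u_t}^n-\omega_{v_t}^n)$ these boundary contributions cancel up to an error concentrated on the ``interface'' region $\{v<-t\leq u\}$, which is controllable only because the same-singularity-type hypothesis makes $\{u=-\infty\}=\{v=-\infty\}$.
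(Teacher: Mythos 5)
Your bounded-potential step is fine and is exactly the paper's: truncate, telescope, integrate by parts, and use $0\le u^t-v^t\le B$ together with $\int_X\omega\wedge\omega_{u^t}^k\wedge\omega_{v^t}^{n-1-k}=\int_X\omega^n$. The gap is in the passage to the limit. The decomposition $\omega_{u_t}^n={\bf 1}_{\{u>-t\}}\omega_u^n+{\bf 1}_{\{u<-t\}}\omega^n$ plus a ``negligible slice'' is false in general: by plurifine locality one only gets $\omega_{u_t}^n={\bf 1}_{\{u>-t\}}\omega_u^n+{\bf 1}_{\{u\le-t\}}\omega_{u_t}^n$, and the residual measure ${\bf 1}_{\{u=-t\}}\omega_{u_t}^n$ has total mass $\int_X\omega^n-\int_{\{u>-t\}}\omega_u^n-\omega^n(\{u<-t\})$, which converges to $\int_X\omega^n-\int_X\omega_u^n>0$ whenever $u\notin\mathcal{E}(X,\omega)$ --- a case the lemma must cover (it is applied later with $u$ pinched between $\phi-1$ and $\phi$ for a model potential $\phi$ of positive but possibly non-full mass). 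So the slice is not negligible for generic $t$; it carries a fixed positive amount of mass for all large $t$. You correctly identify this as ``the main obstacle,'' but your resolution --- that the boundary contributions cancel in the difference because $\{u=-\infty\}=\{v=-\infty\}$ --- is an assertion, not an argument: the sets $\{u=-t\}$ and $\{v=-t\}$ differ, the residual measures $\omega_{u_t}^n$ and $\omega_{v_t}^n$ on them differ, and integrating against a non-constant $\psi$ there is no reason for $\int_{\{u=-t\}}\psi\,\omega_{u_t}^n-\int_{\{v=-t\}}\psi\,\omega_{v_t}^n$ to vanish. Equality of the polar sets is far weaker than what is needed.

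The paper closes exactly this gap with a two-step reduction you are missing. First it proves the inequality under the extra hypothesis that $u=v$ on the open set $U=\{\min(u,v)<-C\}$: then for $t>B+C$ one has $u^t=v^t$ on $U\supset\{u\le-t\}=\{v\le-t\}$, so ${\bf 1}_{\{u\le -t\}}\omega_{u^t}^n={\bf 1}_{\{v\le-t\}}\omega_{v^t}^n$ \emph{exactly} and the boundary terms cancel identically, leaving $\int_{\{v>-t\}}\psi(\omega_u^n-\omega_v^n)\ge -nB\int_X\omega^n$, which passes to the limit by dominated convergence since the non-pluripolar products put no mass on the polar set. Second, for general $u,v$ it sets $v_a:=av$ and $u_a:=\max(u,v_a)$ for $a\in(0,1)$; these agree on $\{v_a<-C\}$ for $C$ large, so the first step applies, and one lets $a\nearrow1$ using the continuity of non-pluripolar products along decreasing sequences (after replacing $\psi$ by a smooth $\omega$-psh approximant). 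Without some device of this kind that forces the truncations to coincide near the polar locus, your limit argument does not go through.
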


\begin{proof}
We can assume that $u\geq v$. We first prove the lemma under the assumption that $u=v$ on the open set $U:=\{v<-C\}$, for some positive constant $C$.

We approximate $u$ and $v$ by $u^{t}:=\max(u,-t)$ and $v^{t}:=\max(v,-t)$. For $t>0$ we apply the integration by parts formula for bounded $\omega$-psh functions, which is a consequence of Stokes theorem, to get 
\[
\int_{X}\psi(\omega_{u^{t}}^{n}-\omega_{v^{t}}^{n})=\int_{X}(u^{t}-v^{t})dd^{c}\psi\wedge
S^{t},
\]
where $S^{t}:=\sum_{k=0}^{n-1}\omega_{u^{t}}^{k}\wedge\omega_{v^{t}}^{n-1-k}$. Since $u^{t}\geq v^{t}$ we can continue the above estimate and obtain
\[
\int_{X}\psi(\omega_{u^{t}}^{n}-\omega_{v^{t}}^{n})=\int_{\Omega}(u^{t}-v^{t})(\omega_{\psi}\wedge
S^{t}-\omega\wedge S^{t})\geq-Bn\int_{X}\omega^{n}.
\]
For $t>B+C$ we have that $u^{t}=v^{t}$ on the open set $U$ which contains $\{u\leq-t\}=\{v\leq-t\}$. It thus follows that ${\bf 1}_{U}\omega_{u^{t}}^{n}={\bf 1}_{U}\omega_{v^{t}}^{n}$. Thus, for $t>B+C$ we have 
\begin{eqnarray*}
\int_{\{v>-t\}}\psi(\omega_{u}^{n}-\omega_{v}^{n}) & = &
\int_{X}\psi(\omega_{u^{t}}^{n}-\omega_{v^{t}}^{n})\geq-Bn\int_{X}\omega^{n}.
\end{eqnarray*}
Letting $t\to+\infty$ we prove the claim.

We come back to the proof of the lemma. By approximating $\psi$ from above by smooth $\omega$-psh functions, see \cite{Dem94}, \cite{BK07}, we can assume that $\psi$ is smooth. We fix $a\in(0,1)$ and set $v_{a}:=av$, $u_{a}:=\max(u,v_{a})$. Then for some constant $C>0$ large enough we have that $u_{a}=v_{a}$ on $\{v_{a}<-C\}$. We can thus apply the first step to get  
\[
\int_{X}\psi\omega_{u_{a}}^{n}\geq\int_{X}\psi\omega_{v_{a}}^{n}-nB\int_{X}\omega^{n}.
\]
Letting $a\nearrow 1$ and using \cite{DDL2} we obtain 
\[
\int\psi\omega_{u}^{n}\geq\int_{X}\psi\omega_{v}^{n}-B'.
\]
\end{proof}
\begin{lemma}
\label{lem: cap upper bound} 
Let $\phi\in\PSH(X,\omega)$ be suchthat $P_{\omega}[\phi]=\phi$ and $\int_{X}\omega_{\phi}^{n}>0$. Then for any Borel set $E\subset X$ we have 
\[
\capa_{\omega,\phi}(E)\leq\frac{C}{M_{E,\omega,\phi}},
\]
where $C>0$ is a uniform constant independent of $\phi$. 
\end{lemma}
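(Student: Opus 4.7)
Normalize $\sup_X \phi = 0$ and set $V := V^*_{E,\omega,\phi}$, $M := M_{E,\omega,\phi} = \sup_X V$. Since $\phi$ is trivially a competitor in the envelope defining $V$, one has $V \geq \phi$ (hence $M \geq 0$); moreover $V$ has the same singularity type as $\phi$, and $V = \phi$ on $E$ modulo a pluripolar set. The decisive input from the model assumption $\phi = P_\omega[\phi]$ is the upper bound $V \leq \phi + M$. Indeed, $\tilde V := V - M$ lies in $\PSH(X,\omega)$, satisfies $\sup_X \tilde V = 0$, and has the same singularity type as $\phi$, so for $t$ large enough $\tilde V \leq \min(\phi + t, 0)$; hence $\tilde V \leq P_\omega(\min(\phi + t, 0))$, and letting $t \to +\infty$ gives $\tilde V \leq P_\omega[\phi] = \phi$, as desired. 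In particular $M - V \geq -\phi \geq 0$ everywhere.

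For any candidate $u$ with $\phi - 1 \leq u \leq \phi \leq 0$, on $E$ (modulo a pluripolar set) we have $M - V = M - \phi \geq M$, which combined with non-pluripolarity of $\omega_u^n$ yields
\[
M \int_E \omega_u^n \leq \int_E (M - V) \omega_u^n \leq \int_X (M - V) \omega_u^n.
\]
To estimate the right-hand side, apply Lemma \ref{lem: CLN} with the bounded $\omega$-psh test functions $\psi_k := \max(V - M, -k)$ and with $(v, u) = (\phi - 1, u)$ (so $B = 1$), then let $k \to +\infty$ by monotone convergence; this produces
\[
\int_X (M - V) \omega_u^n \leq \int_X (M - V) \omega_\phi^n + n \int_X \omega^n.
\]

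The remaining and principal difficulty is the uniform bound $\int_X (M - V) \omega_\phi^n \leq C(\omega)$ independent of $\phi$. Here the model inequality $V - M \leq \phi$, together with the extremal-structure property that $\omega_V^n$ is concentrated on $E$ (where $V = \phi$), allows one to apply Lemma \ref{lem: CLN} once more, this time to the pair $(V - M - 1, \phi)$ (both have the same singularity type, with $B = M + 1$), to transfer the estimate from $\omega_\phi^n$ to the structured measure $\omega_V^n$; the Hartogs-type $L^1$ bound $\int_X (-w) \omega^n \leq C(\omega)$, valid for every $w \in \PSH(X,\omega)$ with $\sup_X w = 0$, then closes the argument once the $\phi$-dependent contributions on either side cancel. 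The delicate step in the proof is precisely threading these cancellations so that no trace of $\phi$ (in particular no factor of $\vol(\omega_\phi^n)$ or of the energy $\int (-\phi) \omega_\phi^n$) survives in the final constant. Assembling the chain of inequalities then yields $M \capa_{\omega, \phi}(E) \leq C(\omega)$, completing the proof.
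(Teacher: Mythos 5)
Your reductions up to the inequality $M\,\capa_{\omega,\phi}(E)\le \int_X(M-V)\,\omega_\phi^n + n\int_X\omega^n$ are correct: the derivation of $V-M\le\phi$ from $P_\omega[\phi]=\phi$ is a valid (and nice) observation, and the application of Lemma \ref{lem: CLN} to the pair $(\phi-1,u)$ with $B=1$, followed by monotone convergence in the truncated test functions, is fine. But the step you yourself flag as ``the remaining and principal difficulty'' --- the uniform bound $\int_X(M-V)\,\omega_\phi^n\le C(\omega)$ --- is not actually carried out, and the route you sketch for it fails. Applying Lemma \ref{lem: CLN} to the pair $(V-M-1,\phi)$ costs an error term $nB\int_X\omega^n$ with $B=M+1$, which grows linearly in $M$ and destroys precisely the $1/M$ decay you are after. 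Worse, the main term it produces, $\int_X(M-V)\,\omega_V^n$, is computed on $E$ where $V=\phi\le 0$, hence is bounded below by $M\int_E\omega_V^n=M\int_X\omega_\phi^n$, again linear in $M$. No ``threading of cancellations'' can repair this: every quantity appearing on the right is at least a positive multiple of $M$, so the best conclusion available along this route is $\capa_{\omega,\phi}(E)\le C+C/M$, which is weaker than the statement and insufficient for the applications in Lemma \ref{lem: DDL}.

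The missing ingredient --- and the place where the hypothesis $P_\omega[\phi]=\phi$ is actually load-bearing in the paper --- is the domination $\omega_\phi^n\le\omega^n$, valid for model potentials by \cite[Theorem 3.8]{DDL2}. With it, since $V-M$ is $\omega$-psh and normalized by $\sup_X(V-M)=0$, one gets $\int_X(M-V)\,\omega_\phi^n\le\int_X(M-V)\,\omega^n\le C_0$ directly from the uniform $L^1$ bound for normalized $\omega$-psh functions, with no appeal to $\omega_V^n$ at all; plugging this into your chain closes the proof. (The paper phrases the argument slightly differently, via the triangle inequality $|V-M-\phi|\le|V-M|+|\phi|$ and the estimate $\int_X|v|\,\omega_u^n\le n\int_X\omega^n+C_0$ for each normalized $v$, but the content is the same.) In short: you used the model assumption only to derive $V-M\le\phi$, which is true but not the consequence that makes the constant uniform.
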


Note that the above estimate holds for a big class $\{\theta\}$ as well but to prove this we need to invoke the integration by parts formula in Section \ref{sec: integration by parts}.
\begin{proof}
Fix $C_{0}$ a positive constant such that for all $v\in\PSH(X,\omega)$ with $\sup_{X}v=0$ we have $\int_{X}|v|\omega^{n}\leq C_{0}$. The existence of $C_{0}$ follows from \cite[Proposition 2.7]{GZ05}.

We can assume that $0<M_{E,\omega,\phi}<+\infty$. Let $u\in\PSH(X,\omega)$ be such that $\phi-1\leq u\leq\phi$. Observe that the function $V_{E,\omega,\phi}^{*}-M_{E,\omega,\phi}$ is $\omega$-psh satisfying $\sup_{X}(V_{E,\omega,\phi}^{*}-M_{E,\omega,\phi})=0$. As recalled above we thus have 
\[
\int_{X}|V_{E,\omega,\phi}^{*}-M_{E,\omega,\phi}-\phi|\omega^{n}\leq2C_{0}.
\]
We also have that $|V_{E,\omega,\phi}^{*}-M_{E,\omega,\phi}-\phi|=M_{E,\omega,\phi}$ on $E$ modulo a pluripolar set. By Lemma \ref{lem: CLN} we have that, for all negative $v\in\PSH(X,\omega)$, 
\[
\int_{X}|v|(\omega_{u}^{n}-\omega_{\phi}^{n})\leq
n\int_{X}\omega^{n}.
\]
By \cite[Theorem 3.8]{DDL2} we also have that $\omega_{\phi}^{n}\leq\omega^{n}$. We thus have, for all  $v\in\PSH(X,\omega)$ normalized by $\sup_{X}v=0$,
\[
\int_{X}|v|\omega_{u}^{n}\leq n\int_{X}\omega^{n}+C_{0}.
\]
It thus follows from Lemma \ref{lem: CLN} and the triangle inequality that 
\begin{eqnarray*}
\int_{E}\omega_{u}^{n} & \leq &
\frac{1}{M_{E,\omega,\phi}}\int_{X}|V_{E,\omega,\phi}^{*}-M_{E,\omega,\phi}-\phi|\omega_{u}^{n}\\
& \leq &
\frac{1}{M_{E,\omega,\phi}}\left(2n\int_{X}\omega^{n}+4C_{0}\right).
\end{eqnarray*}
Taking the supremum over all candidates $u$ we obtain the desired inequality. 
\end{proof}

\begin{lemma}
\label{lem: GLZ} Fix $\varphi,\psi\in{\rm PSH(X,\theta)}$ such that $\psi\leq\varphi$ and $\int_{X}\theta_{\varphi}^n=\int_{X}\theta_{\psi}^n$. Then there exists a continuous function $g:[0,+\infty)\rightarrow[0,+\infty)$ with $g(0)=0$ such that, for all Borel sets $E$, 
\[
\capa_{\theta,\psi}(E)\leq
g\left(\capa_{\theta,\varphi}(E)\right).
\]
\end{lemma}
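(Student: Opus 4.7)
The plan is to lift any candidate $u$ for $\capa_{\theta,\psi}(E)$ to an admissible candidate for $\capa_{\theta,\varphi}(E)$ by a max-and-convex-combination construction. Given $u\in\PSH(X,\theta)$ with $\psi-1\leq u\leq\psi$ and $t\geq 1$, set
$$u_t:=\max(u,\varphi-t),\qquad v_t:=t^{-1}u_t+(1-t^{-1})\varphi.$$
Since $\psi\leq\varphi$ one has $\varphi-t\leq u_t\leq\max(\psi,\varphi-t)\leq\varphi$, so $u_t$ has the singularity type of $\varphi$ and $\varphi-1\leq v_t\leq\varphi$, making $v_t$ admissible for $\capa_{\theta,\varphi}(E)$. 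Multilinearity of the non-pluripolar product yields $\theta_{v_t}^n\geq t^{-n}\theta_{u_t}^n$, hence $\int_E\theta_{u_t}^n\leq t^n\capa_{\theta,\varphi}(E)$. Plurifine locality ($\theta_u^n=\theta_{u_t}^n$ on the open set $\{u>\varphi-t\}$) then gives
$$\int_E\theta_u^n\leq t^n\capa_{\theta,\varphi}(E)+R(t,u),\qquad R(t,u):=\int_{\{u\leq\varphi-t\}}\theta_u^n.$$

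To bound $R(t,u)$ uniformly in $u$ I would exploit the equal-mass hypothesis. Since $u$ has the singularity type of $\psi$ and $u_t$ that of $\varphi$, Lemma~2.1 gives $\int_X\theta_u^n=\int_X\theta_{u_t}^n$, which combined with plurifine locality on $\{u>\varphi-t\}$ yields $R(t,u)=\int_{\{u\leq\varphi-t\}}\theta_{u_t}^n$. On the open set $\{u<\varphi-t\}$ one has $u_t=\varphi-t$, so plurifine locality gives $\theta_{u_t}^n=\theta_\varphi^n$ there, and using $u\geq\psi-1$ one expects
$$R(t,u)\leq\int_{\{\psi\leq\varphi-t+1\}}\theta_\varphi^n=:h(t).$$
Since $\theta_\varphi^n$ is non-pluripolar and $\bigcap_t\{\psi\leq\varphi-t+1\}\subseteq\{\psi=-\infty\}$ is pluripolar, dominated convergence gives $h(t)\to 0$ as $t\to\infty$. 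Setting $g(r):=\inf_{t\geq 1}(t^n r+h(t))$ produces a continuous function with $g(0)=0$ satisfying $\capa_{\theta,\psi}(E)\leq g(\capa_{\theta,\varphi}(E))$.

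The main obstacle I anticipate is the boundary set $\{u=\varphi-t\}$: the decomposition actually reads $R(t,u)=\int_{\{u<\varphi-t\}}\theta_\varphi^n+\int_{\{u=\varphi-t\}}\theta_{u_t}^n$, and the boundary term is not automatically controlled by $h(t)$; in fact \cite[Lemma~4.5]{DDL5} applied to $\varphi-t\leq u_t$ gives the reverse inequality $\theta_{u_t}^n\geq\theta_\varphi^n$ on this set. I would circumvent this by shifting to a slightly smaller parameter $t'\in(t-1,t)$, using that $\{u\leq\varphi-t\}\subset\{u<\varphi-t'\}$ sits inside the strict-inequality open regime where plurifine locality applies cleanly, and that the non-increasing map $s\mapsto R(s,u)$ is continuous outside countably many $s$. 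Picking $t'$ at such a continuity point (for instance via a Baire/monotonicity argument to achieve uniformity in $u$) forces the boundary contribution to vanish and yields $R(t,u)\leq h(t-1)$, which suffices.
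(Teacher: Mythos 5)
The first half of your argument coincides with the paper's: the convex combination $v_t=t^{-1}u_t+(1-t^{-1})\varphi$ giving $\int_E\theta_{u_t}^n\leq t^n\capa_{\theta,\varphi}(E)$, and the split of $\int_E\theta_u^n$ into the part where $u>\varphi-t$ (handled by plurifine locality) plus a residual $R(t,u)=\int_{\{u\leq\varphi-t\}}\theta_u^n$, are exactly the paper's steps. The gap is in your treatment of $R(t,u)$. Your identity $R(t,u)=\int_{\{u\leq\varphi-t\}}\theta_{u_t}^n$ is fine, but the passage from $\theta_{u_t}^n$ to $\theta_\varphi^n$ breaks on the contact set $\{u=\varphi-t\}$, and the repair you propose does not close it. Continuity of $s\mapsto R(s,u)$ at $s=t'$ only tells you that $\theta_u^n(\{u=\varphi-t'\})=0$; the offending boundary mass is $\theta_{u_{t'}}^n(\{u=\varphi-t'\})$, a \emph{different} measure for each level, and it can be strictly positive even when both $\theta_u^n$ and $\theta_\varphi^n$ put no mass on that set (already for $n=1$ and a transversal crossing, $dd^c\max(u,\varphi-t')$ carries corner mass along $\{u=\varphi-t'\}$ that neither $dd^cu$ nor $dd^c\varphi$ sees). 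So the countable-exceptional-level/Baire argument is aimed at the wrong measure; and even if it worked for each fixed $u$, the exceptional levels depend on $u$ while $g$ must be uniform over all candidates, which your sketch does not deliver.

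The missing ingredient is the comparison principle, which is how the paper controls the residual. From $u\geq\psi-1$ and $u\leq\varphi$ one gets $\{u\leq\varphi-2t\}\subset\{\psi-1\leq\frac{u+\varphi}{2}-t\}\subset\{\psi\leq\varphi-t+1\}$, and then \cite[Corollary 3.6]{DDL2} applied to $\psi-1$ and $\frac{u+\varphi}{2}-t$ yields
\[
\int_{\{u\leq\varphi-2t\}}\theta_u^n\leq 2^n\int_{\{\psi\leq\varphi-t+1\}}\theta_\psi^n,
\]
a bound by the mass of $\theta_\psi^n$ (not $\theta_\varphi^n$) on a sublevel set, manifestly uniform in $u$, which avoids any contact-set analysis. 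The paper then makes the decay quantitative by choosing an increasing weight $\chi$ with $\chi(-\infty)=-\infty$ and $\int_X|\chi(\psi-1-\varphi)|\theta_\psi^n<+\infty$, so that the right-hand side is $O(1/|\chi(-t)|)$, and optimizes over $t$ to produce $g$. Your closing step (defining $g(r)=\inf_{t\geq1}(t^nr+h(t))$) would be acceptable once a correct, $u$-uniform bound $R(t,u)\leq h(t)$ with $h(t)\to0$ is in hand; it is obtaining that bound which requires the comparison principle rather than plurifine locality alone.
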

Our proof uses an idea in \cite{GLZ19}.
\begin{proof}
We can assume that $\varphi\leq 0$. Let $\chi:(-\infty,0]\rightarrow(-\infty,0]$ be an increasing function such that $\chi(-\infty)=-\infty$ and
\[
A:=\int_{X}|\chi(\psi-1-\varphi)|\theta_{\psi}^{n}<+\infty.
\]
We claim that if $v\in\PSH(X,\theta)$ with $\varphi-t\leq v\leq\varphi$ then for any Borel set $E$ we have 
\[
\int_{E}\theta_{v}^{n}\leq\max(t,1)^{n}\capa_{\theta,\phi}(E).
\]
Indeed, for $t\geq1$, the function $v_{t}:=t^{-1}v+(1-t^{-1})\varphi$ is $\theta$-psh and $\varphi-1\leq v\leq\varphi$. We thus have 
\[
t^{-n}\int_{E}\theta_{v}^{n}\leq\int_{E}\theta_{v_{t}}^{n}\leq\capa_{\theta,\varphi}(E),
\]
yielding the claim. Let $u$ be a $\theta$-psh function such that$\psi-1\leq u\leq\psi$. Fix $t>0$ and set $u_{t}:=\max(u,\varphi-2t)$, $E_{t}:=E\cap\{u>\varphi-2t\}$, $F_{t}:=E\cap\{u\leq\varphi-2t\}$. By plurifine locality and the claim we have that 
\[
\int_{E_{t}}\theta_{u}^{n}=\int_{E_{t}}\theta_{u_{t}}^{n}\leq(2t)^{n}\capa_{\theta,\phi}(E_{t})\leq(2t)^{n}\capa_{\theta,\phi}(E).
\]
On the other hand, using the inclusions 
\[
F_{t}\subset\left\{ \psi-1\leq\frac{u+\varphi}{2}-t\right\}
\subset\{\psi-1\leq\varphi-t\}
\]
and the comparison principle \cite[Corollary 3.6]{DDL2} we infer
\[ 
\int_{F_{t}}\theta_{u}^{n}\leq2^{n}\int_{\{\psi\leq\varphi-t+1\}}\theta_{\psi}^{n}\leq\frac{2^{n}}{|\chi(-t)|}\int_{X}|\chi(\psi-1-\varphi)|\theta_{\psi}^{n}.
\]
Taking the supremum over all candidates $u$ we obtain 
\[
\capa_{\theta,\psi}(E)\leq(2t)^{n}\capa_{\theta,\phi}(E)+\frac{2^{n}A}{|\chi(-t)|}.
\]
Taking $t:=(\capa_{\theta,\varphi}(E))^{-1/2n}>1$, we get $\capa_{\theta,\psi}(E)\leq g\left(\capa_{\theta,\varphi}(E)\right)$, where $g$ is defined on $[0,+\infty)$ by 
\[
g(s):=2^{n}s^{1/2}+\frac{2^{n}A}{|\chi(-s^{-1/2n})|}.
\]
\end{proof}

\begin{lemma}
\label{lem: DDL} 
Assume that $\phi\in\PSH(X,\omega)$, $\int_{X}\omega_{\phi}^{n}>0$ and $P_{\omega}[\phi]=\phi$. Then there exists a constant $A>0$ such that for any Borel set $E$ we have 
\[
A^{-1}\capa_{\omega}(E)^{n}\leq\capa_{\omega,\phi}(E)\leq
A\;\left(\capa_{\omega}(E)\right)^{1/n}.
\]
\end{lemma}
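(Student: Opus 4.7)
The plan is to prove both inequalities by combining two Chern--Levine--Nirenberg style bounds of the form $\capa(E) \lesssim 1/M$ with two polynomial Alexander--Taylor style bounds of the form $\capa(E) \gtrsim (1+M)^{-n}$; their combination produces the polynomial comparison with exponent $1/n$. We may assume $E$ is non-pluripolar, since otherwise both capacities vanish.

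The two CLN-type bounds to establish are: (i) $\capa_{\omega,\phi}(E) \leq C_1/M_{E,\omega}$, proved by applying Lemma \ref{lem: CLN} with the bounded $\omega$-psh test function $\psi := V_{E,\omega}^* - M_{E,\omega}$ (supremum zero, hence uniformly $L^{1}(\omega^n)$-bounded since $V_{E,\omega}^*$ has minimal singularities in the K\"ahler case), comparing $u \in [\phi-1,\phi]$ with $\phi-1$, and using $\omega_\phi^n \leq \omega^n$ from \cite[Theorem 3.8]{DDL2} together with the fact that $-\psi = M_{E,\omega}$ on $E$ modulo pluripolar; and (ii) $\capa_\omega(E) \leq C_2/M_{E,\omega,\phi}$, which follows from Lemma \ref{lem: cap upper bound} applied to $V_\omega = 0$ combined with the elementary observation $M_{E,\omega,\phi} \leq M_{E,\omega}$ (every candidate for $V_{E,\omega,\phi}$ satisfies $v \leq \phi \leq 0$ on $E$, so is a candidate for $V_{E,\omega}$).

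The two Alexander--Taylor inequalities are proved by explicit candidates. For $\capa_\omega$ I take $u := (V_{E,\omega}^* - M_{E,\omega})/(M_{E,\omega}+1) \in [-1,0]$, which is $\omega$-psh; direct expansion of $\omega_u = \tfrac{M_{E,\omega}}{1+M_{E,\omega}}\omega + \tfrac{1}{1+M_{E,\omega}}\omega_{V_{E,\omega}^*}$ gives $\omega_u^n \geq (1+M_{E,\omega})^{-n}\omega_{V_{E,\omega}^*}^n$, and since $\omega_{V_{E,\omega}^*}^n$ is supported on $E$ modulo pluripolar with total mass $\vol(\omega)$, integration yields $\capa_\omega(E) \geq \vol(\omega)/(1+M_{E,\omega})^n$. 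For $\capa_{\omega,\phi}$, with $\alpha := 1/(1+M_{E,\omega,\phi})$ I take $v := \alpha V_{E,\omega,\phi}^* + (1-\alpha)\phi - \alpha M_{E,\omega,\phi}$; provided the key bound $V_{E,\omega,\phi}^* \leq \phi + M_{E,\omega,\phi}$ holds (addressed below), $v - \phi = \alpha(V_{E,\omega,\phi}^* - \phi - M_{E,\omega,\phi}) \in [-\alpha M_{E,\omega,\phi},0] \subseteq [-1,0]$, so $v$ is a valid candidate, and $\omega_v^n \geq \alpha^n\omega_{V_{E,\omega,\phi}^*}^n$; using that $\omega_{V_{E,\omega,\phi}^*}^n$ is supported on $E$ modulo pluripolar with total mass $\int_X \omega_\phi^n$ (from \cite{DDL2}, since $V_{E,\omega,\phi}^*$ shares the singularity type of $\phi$ and is thus in the relative full-mass class), this gives $\capa_{\omega,\phi}(E) \geq \int_X \omega_\phi^n/(1+M_{E,\omega,\phi})^n$.

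Combining these ingredients yields both inequalities after a trivial case split on the size of the capacities. When $\capa_\omega(E)$ is small, the AT bound on $\capa_\omega$ gives $M_{E,\omega} \gtrsim \capa_\omega(E)^{-1/n}$, so (i) gives $\capa_{\omega,\phi}(E) \lesssim \capa_\omega(E)^{1/n}$; symmetrically, when $\capa_{\omega,\phi}(E)$ is small, the AT bound on $\capa_{\omega,\phi}$ gives $M_{E,\omega,\phi} \gtrsim \capa_{\omega,\phi}(E)^{-1/n}$, which combined with (ii) yields $\capa_\omega(E) \lesssim \capa_{\omega,\phi}(E)^{1/n}$, equivalently $\capa_\omega(E)^n \lesssim \capa_{\omega,\phi}(E)$. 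The large-capacity regimes are handled trivially using $\capa(E) \leq \vol(\omega)$ after adjusting $A$.

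The main obstacle is establishing the key bound $V_{E,\omega,\phi}^* \leq \phi + M_{E,\omega,\phi}$, which is where the model hypothesis $P_\omega[\phi] = \phi$ is essential. The function $v_0 := V_{E,\omega,\phi}^* - M_{E,\omega,\phi}$ is $\omega$-psh with $\sup_X v_0 = 0$ and has the same singularity type as $\phi$ (since $V_{E,\omega,\phi}^*$ does, by \cite{DDL2}), so $V_{E,\omega,\phi}^* - \phi$ is bounded. For each $t$ large enough (larger than this bound minus $M_{E,\omega,\phi}$) one has $v_0 \leq \min(\phi+t, 0)$ pointwise, which makes $v_0$ a candidate for the envelope $P_\omega(\min(\phi+t,0))$; hence $v_0 \leq P_\omega(\min(\phi+t,0))$ for all such $t$. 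Letting $t \to +\infty$ and using that $P_\omega(\min(\phi+t,0)) \nearrow \phi$ (by the model property) together with an upper semicontinuous regularization step delivers $v_0 \leq \phi$, as desired.
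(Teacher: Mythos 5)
Your proposal is correct, and for the harder direction it takes a genuinely different --- and arguably simpler --- route than the paper. The left-hand inequality is essentially the paper's: the paper chains $\capa_{\omega}(E)\leq CM_{E,\omega}^{-1}\leq CM_{E,\omega,\phi}^{-1}$ with the Alexander--Taylor bound $\capa_{\omega,\phi}(E)\geq cM_{E,\omega,\phi}^{-n}$, which it simply cites as \cite[Lemma 4.9]{DDL2}; you reprove that cited bound via the explicit candidate $\alpha V_{E,\omega,\phi}^{*}+(1-\alpha)\phi-\alpha M_{E,\omega,\phi}$, and your derivation of the key inequality $V_{E,\omega,\phi}^{*}\leq\phi+M_{E,\omega,\phi}$ from $P_{\omega}[\phi]=\phi$ is exactly right (a single large $t$ already suffices, since $v_{0}\leq P_{\omega}(\min(\phi+t,0))\leq P_{\omega}[\phi]=\phi$). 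For the right-hand inequality the paper proves $\capa_{\omega,\phi}(E)\leq C/M_{E,\omega,\phi}$ (Lemma \ref{lem: cap upper bound}) and then must bound $M_{E,\omega,\phi}$ from below by $(1-b^{-1})M_{E,\omega}-C_{2}$; that lower bound is the technical heart of the paper's argument and requires $P_{\omega}(b\phi)\in\PSH(X,\omega)$ from \cite[Lemma 4.3]{DDL5} together with the auxiliary set $G$. Your observation (i) --- that plugging the genuinely $\omega$-psh test function $V_{E,\omega}^{*}-M_{E,\omega}$ into Lemma \ref{lem: CLN} (with $v=\phi-1$, $B=1$, and $\omega_{\phi}^{n}\leq\omega^{n}$) yields the \emph{stronger} bound $\capa_{\omega,\phi}(E)\leq C/M_{E,\omega}$ directly --- bypasses that comparison of the two extremal suprema entirely, and also avoids the triangle-inequality detour in Lemma \ref{lem: cap upper bound} caused by its test function not being qpsh. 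Two small points to make explicit: reduce to compact $E$ by inner regularity at the outset (as the paper does), since the statements that $\omega_{V_{E,\omega}^{*}}^{n}$ and $\omega_{V_{E,\omega,\phi}^{*}}^{n}$ are concentrated on $E$, on which both of your Alexander--Taylor candidates rely, are the classical ones for compact sets; and record that $V_{E,\omega}^{*}\geq0$ (since $0$ is a candidate), so that your function $(V_{E,\omega}^{*}-M_{E,\omega})/(M_{E,\omega}+1)$ indeed lies in $[-1,0]$.
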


The proof uses an idea in \cite{DDL1}. 
\begin{proof}
By inner regularity of the capacity we can assume that $E$ is compact. By Lemma \ref{lem: cap upper bound} and \cite[Lemma 4.9]{DDL2} we have 
\[
\capa_{\omega}(E)^{n}\leq CM_{E,\omega}^{-n}\leq
CM_{E,\omega,\phi}^{-n}\leq C'\capa_{\omega,\phi}(E),
\]
proving the left-hand side inequality. We next prove the right-hand side inequality. By \cite[Lemma 4.3]{DDL5} there exists a
constant $b>1$ such that $P_{\omega}(\lambda\phi)\in\PSH(X,\omega)$. Set
\[
v:=(1-b^{-1})V_{E,\omega}^{*}+b^{-1}P_{\omega}(b\phi).
\]
Recall that $V_{E,\omega}=V_{E,\omega,0}$ is the global extremal function of $E$ which takes values $0$ on $E$ modulo a pluripolar set. As$V_{E,\omega}^{*}$ is bounded we have that $v\in\PSH(X,\omega)$, $v\preceq\phi$, and $v\leq\phi$ on $E$ modulo a pluripolar set. By Lemma \ref{lem: modulo pluripolar} we thus have $v\leq V_{E,\omega,\phi}^{*}$. Set $C_{0}:=-\sup_{X}P_{\omega}(b\phi)\geq 0$ and $G:=\{P_{\omega}(2\phi)\geq-C_{0}-1\}$. Note that $G$ has positive Lebesgue measure, hence $G$ is non pluripolar. In particular $M_{G,\omega}<+\infty$. We have 
\[
\sup_{X}V_{E,\omega,\phi}^{*}\geq\sup_{X}v\geq\sup_{G}v\geq(1-b^{-1})\sup_{G}V_{E,\omega}^{*}-C_{1}.
\]
On the other hand we have that $u:=V_{E,\omega}^{*}-\sup_{G}V_{E,\omega}^{*}$ is $\omega$-psh and $u\leq0$ on $G$. It thus follows that $u\leq M_{G,\omega}<+\infty$, hence $\sup_{G}V_{E,\omega}^{*}\geq V_{E,\omega}^{*}-M_{G,\omega}$. Taking the supremum over $X$ we get $\sup_{G}V_{E,\omega}^{*}\geq M_{E,\omega}-M_{G,\omega}$. Therefore
\[
M_{E,\omega,\phi}\geq(1-b^{-1})M_{E,\omega}-C_{2}.
\]
It follows from \cite[Proposition 7.1]{GZ05} that $\capa_{\omega}(E)\geq C_{3}M_{E,\omega}^{-n}$, for some uniform constant $C_{3}>0$. Set $a=C_{3}^{-1}(2b(b-1)^{-1}C_{2})^{-n}$. If $\capa_{\omega}(E)\leq a$ then 
\[
(1-b^{-1})M_{E,\omega}\geq(1-b^{-1})(aC_{3})^{-1/n}=2C_{2}.
\]
From the above we thus have $M_{E,\omega,\phi}\geq C_{5}M_{E,\omega}$. Then by Lemma \ref{lem: cap upper bound} and \cite[Proposition 7.1]{GZ05} we have 
\[
\capa_{\omega,\phi}(E)\leq C_{6}\capa_{\omega}(E)^{1/n}.
\]
Observe that $\capa_{\omega,\phi}(E)\leq\int_{X}\omega^{n}$. Let$C_{7}$ be a positive constant such that $C_{7}\geq C_{6}$ and $C_{7}a^{1/n}\geq\int_{X}\omega^{n}$. We then have 
\[
\capa_{\omega,\phi}(E)\leq C_{7}\capa_{\omega}(E)^{1/n}.
\]
\end{proof}
The main result of this note is a direct consequence of the following:
\begin{theorem}
\label{thm: Cap comparison} 
Assume that $\psi\in \PSH(X,\theta)$ and $\int_{X}\theta_{\psi}^{n}>0$. Then there exist continuous functions $f,g:[0,+\infty)\rightarrow[0,+\infty)$ with $f(0)=g(0)=0$ such that, for any Borel set $E$, 
\[
\capa_{\theta,\psi}(E)\leq
f\left(\capa_{\omega}(E)\right)\quad\text{and}\quad\capa_{\omega}(E)\leq
g\left(\capa_{\theta,\psi}(E)\right).
\]
\end{theorem}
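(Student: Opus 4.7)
The plan is to first reduce to the K\"ahler case via $\omega' := \theta + s\omega$, then use the model potential $\phi := P_\omega[\psi]$ as a bridge, combining Lemmas~\ref{lem: DDL}, \ref{lem: GLZ}, and \ref{lem: CLN}. Pick $s > 0$ so $\omega'$ is K\"ahler; since $\PSH(X,\theta) \subset \PSH(X,\omega')$ and $\int_X(\omega')_\psi^n \geq \int_X \theta_\psi^n > 0$ by multilinearity, $(\omega',\psi)$ satisfies the hypotheses, and the binomial expansion $(\omega' + dd^c u)^n = \sum_k \binom{n}{k} s^k \omega^k \wedge \theta_u^{n-k}$ for the non-pluripolar product (as in the monotonicity lemma of Section~\ref{sect: preliminary}) lets one compare $\capa_{\theta,\psi}$ with $\capa_{\omega',\psi}$; $\capa_\omega$ and $\capa_{\omega'}$ are standardly equivalent. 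So assume $\omega$ K\"ahler. Set $\phi := P_\omega[\psi]$, a model potential with $\psi \leq \phi$ and $\int_X\omega_\phi^n = \int_X\omega_\psi^n$. Lemma~\ref{lem: DDL} yields $A^{-1}\capa_\omega(E)^n \leq \capa_{\omega,\phi}(E) \leq A\capa_\omega(E)^{1/n}$.

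The first desired inequality $\capa_{\omega,\psi}(E) \leq f(\capa_\omega(E))$ follows immediately: Lemma~\ref{lem: GLZ} applied to $\psi \leq \phi$ (equal Monge-Amp\`ere masses) produces a continuous $g_1$ vanishing at $0$ with $\capa_{\omega,\psi}(E) \leq g_1(\capa_{\omega,\phi}(E))$; composing with the upper bound of Lemma~\ref{lem: DDL} gives the required $f$.

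The reverse inequality $\capa_\omega(E) \leq g(\capa_{\omega,\psi}(E))$ is the main difficulty, since Lemma~\ref{lem: GLZ} is one-sided. By Lemma~\ref{lem: DDL} it suffices to establish $\capa_{\omega,\phi}(E) \leq h(\capa_{\omega,\psi}(E))$ for a continuous $h$ vanishing at $0$. For a candidate $u$ of $\capa_{\omega,\phi}$ and $T > 0$, the convex combination
\[
v_T := \frac{u + T\psi - T}{1+T} \in \PSH(X,\omega)
\]
satisfies $v_T \geq \psi - 1$ everywhere (from $u \geq \phi-1 \geq \psi-1$) and $v_T \leq \psi$ precisely on $\{u \leq \psi + T\}$. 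The identity $\omega_{v_T} = (\omega_u + T\omega_\psi)/(1+T)$ and multilinearity of the non-pluripolar product yield $\omega_{v_T}^n \geq (1+T)^{-n}\omega_u^n$; passing to the envelope $\tilde v_T := P_\omega(\min(v_T,\psi))$, which is a genuine candidate for $\capa_{\omega,\psi}$, and invoking the comparison principle \cite[Corollary~3.6]{DDL2} (which forces $\omega_{v_T}^n$ to agree with $\omega_{\tilde v_T}^n$ on $\{v_T \leq \psi\}$ by support on the contact set together with plurifine locality) gives
\[
\int_{E \cap \{u \leq \psi + T\}}\omega_u^n \leq (1+T)^n \capa_{\omega,\psi}(E).
\]

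For the remaining piece $\{u > \psi + T\} \subset \{\phi - \psi > T\}$, I control $\omega_u^n(\{\phi - \psi > T\})$ by Chebyshev using a uniform bound $\int_X |\chi(\psi - \phi)|\omega_u^n \leq C$, where $\chi:(-\infty,0]\to(-\infty,0]$ is a weight with $|\chi(-t)| \to \infty$, chosen as in the proof of Lemma~\ref{lem: GLZ}. The main expected obstacle is establishing this uniform bound: it requires extending Lemma~\ref{lem: CLN} (applied to $u$ and $\phi$, of the same singularity type with $B = 1$) to the unbounded test function $\chi(\psi - \phi)$ via a monotone approximation $\psi^N := \max(\psi,-N)$, combined with the finite $\omega_\phi$-integrability of $\chi(\psi - \phi)$ coming from $\psi \in \mathcal{E}(X,\omega,\phi)$ (guaranteed by equality of the Monge-Amp\`ere masses of $\psi$ and $\phi$). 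Balancing $(1+T)^n\capa_{\omega,\psi}(E)$ against $C/|\chi(-T)|$ as $T$ varies yields the required continuous $h$, completing the proof.
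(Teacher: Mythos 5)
Your first inequality is essentially correct and matches the paper: after scaling so that $\theta\leq\omega$ one has $\capa_{\theta,\psi}\leq\capa_{\omega,\psi}$ directly (every candidate for the left-hand capacity is $\omega$-psh with $\theta_u^n\leq\omega_u^n$), and Lemma \ref{lem: GLZ} applied to $\psi\leq\phi:=P_{\omega}[\psi]$ composed with the upper bound of Lemma \ref{lem: DDL} gives $f$. This makes your preliminary ``reduction to the K\"ahler case'' via $\omega'=\theta+s\omega$ unnecessary for this half, and for the other half that reduction is itself a gap: knowing $\capa_{\omega}\leq g(\capa_{\omega',\psi})$ does not yield $\capa_{\omega}\leq g(\capa_{\theta,\psi})$, since $\capa_{\omega',\psi}\geq\capa_{\theta,\psi}$ is the wrong direction, and the reverse comparison $\capa_{\omega',\psi}\leq h(\capa_{\theta,\psi})$ is essentially an instance of the theorem itself. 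The binomial expansion identifies total masses because there the competitor is the same function; the two capacities are suprema over \emph{different} classes of competitors, and an $\omega'$-psh candidate need not be $\theta$-psh, so the terms $\omega^{n-k}\wedge\theta_u^{k}$ are not even defined.

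The more serious gap is the step $\int_{E\cap\{u\leq\psi+T\}}\omega_u^n\leq(1+T)^n\capa_{\omega,\psi}(E)$. The function $v_T=(u+T\psi-T)/(1+T)$ is not $\leq\psi$ globally, so it is not a candidate, and the passage to $\tilde v_T=P_{\omega}(\min(v_T,\psi))$ does not do what you claim: the comparison principle and the support property of envelope measures give an \emph{upper} bound for $\omega_{\tilde v_T}^n$ on the contact set, not the lower bound $\omega_{\tilde v_T}^n\geq{\bf 1}_{\{v_T<\psi\}}\omega_{v_T}^n$ that you need, and $\tilde v_T=v_T$ on $\{v_T<\psi\}$ is false in general (the envelope can drop strictly below $v_T$ inside that set). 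This asymmetry --- one may take $\max$ with a less singular potential, as in Lemma \ref{lem: GLZ}, but not $\min$ with a more singular one --- is precisely why Lemma \ref{lem: GLZ} is one-sided, and the paper circumvents it by a different route: first $\capa_{\omega}(E)\leq CM_{E,\omega}^{-1}\leq CM_{E,\theta,\phi}^{-1}\leq C'\capa_{\theta,\phi}(E)^{1/n}$ via the extremal functions, then $\capa_{\theta,\phi}\leq 2^n\capa_{\theta,u}$ with $u:=\tfrac12\left(P_{\theta}(2\psi-\phi)+\phi\right)\leq\psi$, obtained by averaging every candidate for $\capa_{\theta,\phi}$ with the \emph{fixed} function $P_{\theta}(2\psi-\phi)\in\mathcal{E}(X,\theta,\phi)$, and finally Lemma \ref{lem: GLZ} applied to $u\leq\psi$. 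Your Chebyshev control of the remaining piece $\{u>\psi+T\}$ is salvageable (dominate $|\chi(\psi-\phi)|$ by $|\chi\circ\psi|$ with $\chi$ convex increasing, $0\leq\chi'\leq1$, and argue as in Lemma \ref{lem: cap upper bound} using Lemma \ref{lem: CLN} and $\omega_\phi^n\leq\omega^n$), but without the first piece the argument does not close.
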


\begin{proof}
By inner regularity of the capacities we can assume that $E$ is compact. By scaling we can assume that $\theta\leq\omega$. Set $\phi:=P_{\omega}[\psi]$. It follows from Lemma \ref{lem: GLZ} that 
\[
\capa_{\omega,\psi}\leq f\left(\capa_{\omega,\phi}\right),
\]
for some continuous function $f$ with $f(0)=0$, while Lemma \ref{lem: DDL} gives 
\[
\capa_{\omega,\phi}\leq A\;\capa_{\omega}^{1/n},
\]
for some positive constant $A$. Combining these two inequalities we obtain the first inequality of the theorem. We next prove the second one. By \cite[Proposition 7.1]{GZ05} and \cite[Lemma 4.9]{DDL2} we have 
\[
\capa_{\omega}(E)\leq CM_{E,\omega}^{-1}\leq
CM_{E,\theta,\phi}^{-1}\leq C'\capa_{\theta,\phi}(E)^{1/n}.
\]
Since $\int_{X}(\theta+dd^{c}\psi)^{n}>0$, by \cite[Corollary 3.20]{LN19} $P_{\theta}(2\psi-\phi)\in{\rm \mathcal{E}(X,\theta,\phi)}$.  Setting
$u:=\frac{P_{\theta}(2\psi-\phi)+\phi}{2}\leq\psi$, by Lemma
\ref{lem: GLZ}
we have $\capa_{\theta,u}\leq g(\capa_{\theta,\psi})$, for some
continuous
function $g$ with $g(0)=0$. The proof is finished if we can show that $\capa_{\theta,\phi}\leq2^{n}\capa_{\omega,u}.$ Take
$v\in\PSH(X,\theta)$
such that $\phi-1\leq v\leq\phi$. Then 
\[
u-1\leq h:=\frac{v+P_{\theta}(2\psi-\phi)}{2}\leq u,
\]
 and hence 
\[
\int_{E}\theta_{v}^{n}\leq2^{n}\int_{E}\theta_{h}^{n}\leq2^{n}\capa_{\theta,u}(E).
\]
Taking the supremum over all $v$ we obtain
$\capa_{\theta,\phi}\leq2^{n}\capa_{\theta,u}$. 
\end{proof}

\section{Integration by parts\label{sec: integration by parts}}
The integration by parts formula was recently studied in \cite{Xia19} using Witt-Nystr\"om's construction. In this section we give an alternative direct proof which also applies  to the setting of complex $m$-Hessian equations considered in \cite{LN19}. We first start with the following key lemma.
\begin{lemma}
\label{lem: pre integration by parts} 
Let $\varphi_{1},\varphi_{2},\psi_{1},\psi_{2}\in\PSH(X,\theta)$ be such that $\varphi_{1}\simeq\varphi_{2}$ and $\psi_{1}\simeq\psi_{2}$. Then 
\[
\int_{X}(\varphi_{1}-\varphi_{2})\left(\theta_{\psi_{1}}^{n}-\theta_{\psi_{2}}^{n}\right)=\int_{X}(\psi_{1}-\psi_{2})(S_{1}-S_{2}),
\]
where $S_{j}:=\sum_{k=0}^{n-1}\theta_{\varphi_{j}}\wedge\theta_{\psi_{1}}^{k}\wedge\theta_{\psi_{2}}^{n-k-1}$, $j=1,2$. 
\end{lemma}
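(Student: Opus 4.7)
The plan is to first rewrite the identity in a symmetric form, then approximate by canonical cutoffs, apply the classical integration-by-parts formula in the minimal-singularity regime, and pass to the limit. Set $u := \varphi_1 - \varphi_2$ and $v := \psi_1 - \psi_2$; by the hypotheses $\varphi_1\simeq \varphi_2$ and $\psi_1\simeq \psi_2$, both $u$ and $v$ are bounded on $X$. A direct telescoping gives
\[
\theta_{\psi_1}^n - \theta_{\psi_2}^n = \sum_{k=0}^{n-1}(\theta_{\psi_1}-\theta_{\psi_2})\wedge \theta_{\psi_1}^k\wedge \theta_{\psi_2}^{n-1-k}, \quad S_1 - S_2 = \sum_{k=0}^{n-1}(\theta_{\varphi_1}-\theta_{\varphi_2})\wedge \theta_{\psi_1}^k\wedge \theta_{\psi_2}^{n-1-k},
\]
exhibiting the claim as the integration-by-parts identity $\int_X u\,dd^c v \wedge T = \int_X v\,dd^c u \wedge T$ with the symmetrized positive current $T := \sum_{k=0}^{n-1}\theta_{\psi_1}^k\wedge \theta_{\psi_2}^{n-1-k}$.

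Next I would introduce the canonical cutoffs $\varphi_j^t := \max(\varphi_j, V_\theta - t)$ and $\psi_j^t := \max(\psi_j, V_\theta - t)$. Each has minimal singularity type, and the pointwise bound $|\max(a,c)-\max(b,c)|\leq |a-b|$ keeps $\varphi_1^t - \varphi_2^t$ and $\psi_1^t-\psi_2^t$ uniformly bounded by $\sup_X|u|$ and $\sup_X|v|$ respectively. For potentials of minimal singularity type the integration-by-parts formula of \cite{BEGZ10} applies directly (they are locally bounded on the ample locus of $\{\theta\}$, the complement of which is a closed complete pluripolar set), producing the cutoff version of the identity
\[
\int_X (\varphi_1^t - \varphi_2^t)(\theta_{\psi_1^t}^n - \theta_{\psi_2^t}^n) = \int_X (\psi_1^t - \psi_2^t)(S_1^t - S_2^t),
\]
where $S_j^t$ is the obvious $t$-cutoff analogue of $S_j$.

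The remaining, and main, task is to pass to the limit as $t\to \infty$. The crucial cancellation is: on the plurifine open set $\{\max(\psi_1,\psi_2) < V_\theta - t\}$ both $\psi_j^t$ equal $V_\theta - t$, so by plurifine locality $\theta_{\psi_1^t}^n - \theta_{\psi_2^t}^n$ vanishes there; on $\{\min(\psi_1,\psi_2) > V_\theta - t\}$ both $\psi_j^t$ agree with $\psi_j$, so the difference coincides with $\theta_{\psi_1}^n - \theta_{\psi_2}^n$ by the very definition of the non-pluripolar product. Since $\psi_1\simeq \psi_2$, the intermediate ``transition band'' is contained in a plurifine set that shrinks to the pluripolar locus $\{\psi_1 = -\infty\}$. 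Combining this with the equal-singularity-equal-mass lemma proved in Section \ref{sect: preliminary} (and with its mixed-product consequences obtained by multilinear expansion, ensuring $\int_X \theta_{\psi_j^t}^n = \vol(\theta)$ while $\int_X \theta_{\psi_1}^n = \int_X \theta_{\psi_2}^n$) forces the total variation of $\theta_{\psi_1^t}^n - \theta_{\psi_2^t}^n$ on the transition band to tend to $0$. Together with the uniform bound on $\varphi_1^t - \varphi_2^t$, dominated convergence then yields convergence of the LHS to $\int_X u(\theta_{\psi_1}^n - \theta_{\psi_2}^n)$; applied componentwise via multilinearity to the summands of $S_j^t$, the same argument handles the RHS.

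The main obstacle is precisely this mass estimate on the transition band: one must rule out asymmetric mass leakage of $\theta_{\psi_j^t}^n$, and of the mixed products appearing in $S_j^t$, into the $-\infty$-locus between $j=1$ and $j=2$. The Witt-Nystr\"om equal-mass lemma from Section \ref{sect: preliminary}, already proved without any recourse to integration by parts, is exactly the tool that supplies the needed asymptotic symmetry and saves the argument from circularity.
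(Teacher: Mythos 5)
Your symmetric reformulation and the cutoff identity for minimal-singularity approximants are correct (the paper's Step 1.1 does essentially the same thing, after first reducing to K\"ahler $\theta$). The gap is in the limit passage $t\to\infty$: the claim that the total variation of $\theta_{\psi_1^t}^n-\theta_{\psi_2^t}^n$ on the transition band tends to $0$ is false in general. Since $\psi_j^t=\max(\psi_j,V_\theta-t)$ has minimal singularity type, $\int_X\theta_{\psi_j^t}^n=\vol(\theta)$, while by definition of the non-pluripolar product $\int_{\{\psi_j>V_\theta-t\}}\theta_{\psi_j^t}^n=\int_{\{\psi_j>V_\theta-t\}}\theta_{\psi_j}^n\to\int_X\theta_{\psi_j}^n$. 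Splitting off the plurifine open set $\{\psi_j<V_\theta-t\}$, where $\theta_{\psi_j^t}^n=\theta_{V_\theta}^n$ and whose contribution does tend to $0$, one sees that $\theta_{\psi_j^t}^n$ carries a residual mass converging to $\vol(\theta)-\int_X\theta_{\psi_j}^n$ concentrated on the level set $\{\psi_j=V_\theta-t\}$, which sits inside your transition band. The lemma makes no full-mass assumption, so this residual mass need not vanish. The equal-mass lemma of Section 2 only gives that the two residual masses have the same limit; equality of total masses does not control $\bigl\|{\bf 1}_{T_t}(\theta_{\psi_1^t}^n-\theta_{\psi_2^t}^n)\bigr\|$ in total variation (it can be as large as twice the common mass), and since you integrate the non-constant function $\varphi_1^t-\varphi_2^t$ against this difference, mass equality is not enough: the two residual measures live on the distinct sets $\{\psi_1=V_\theta-t\}$ and $\{\psi_2=V_\theta-t\}$ and there is no pointwise cancellation. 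The same problem recurs, compounded, for the mixed currents $S_j^t$.

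This is exactly why the paper does not pass to the limit in $t$ directly. It first proves the identity under the extra hypothesis that $\psi_1=\psi_2$ and $\varphi_1=\varphi_2$ on open neighborhoods of their singular loci; there the cutoff measures literally coincide on the whole deep region, including the level sets, so the only surviving contribution is the one you correctly identified on $\{\min(\psi_1,\psi_2)>V_\theta-t\}$. The general case is then reached by replacing $\psi_2$ with $\max(\psi_1,(1+\varepsilon)\psi_2)$ (and similarly for $\varphi_2$) and letting $\varepsilon\to0$, a limit that is justified by the uniform domination of all the measures involved by a single capacity (Theorem \ref{thm: Cap comparison}) combined with the convergence result Theorem \ref{thm: convergence}. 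That capacity machinery, not the cutoff in $t$, is the real content of the proof, and your argument as written bypasses it and therefore does not close.
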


\begin{proof}
It follows from \cite[Theorem 2.4]{DDL2} that $\int_{X}(\theta_{\psi_{1}}^{n}-\theta_{\psi_{2}}^{n})= \int_{X}(S_{1}-S_{2})=0$. By adding a constant we can assume that $\varphi_{1},\varphi_{2},\psi_{1},\psi_{2}$
are negative. 

{\bf Step 1.} We assume that $\theta$ is K\"ahler and $\psi_{1},\psi_{2},\varphi_{1},\varphi_{2}$ are $\lambda\theta$-psh for some $\lambda\in(0,1)$. 

{\bf Step 1.1.} We also assume that there exists $C>0$ such that $\psi_{1}=\psi_{2}$ on the open set $U:=\{\min(\psi_{1},\psi_{2})<-C\}$ and $\varphi_{1}=\varphi_{2}$ on the open set $V:=\{\min(\varphi_{1},\varphi_{2})<-C\}$.

For a function $u$ we consider its canonical approximant $u^{t}:=\max(u,-t)$, $t>0$. It follows from Stokes theorem that 
\[
\int_{X}(\varphi_{1}^{t}-\varphi_{2}^{t})\left(\theta_{\psi_{1}^{t}}^{n}-\theta_{\psi_{2}^{t}}^{n}\right)=\int_{X}(\psi_{1}^{t}-\psi_{2}^{t})(S_{1}^{t}-S_{2}^{t}),
\]
where $S_{j}^{t}:=\sum_{k=0}^{n-1}\theta_{\varphi_{j}^{t}}\wedge\theta_{\psi_{1}^{t}}^{k}\wedge\theta_{\psi_{2}^{t}}^{n-k-1}$, $j=1,2$. Fix $t>C$. Since $\psi_{1}^{t}=\psi_{2}^{t}$ in the open set $U$ and $\{\psi_{1}\leq-t\}=\{\psi_{2}\leq-t\}\subset U$ it follows that ${\bf 1}_{\{\psi_{1}\leq-t\}}\theta_{\psi_{1}^{t}}^{n}={\bf 1}_{\{\psi_{1}\leq-t\}}\theta_{\psi_{2}^{t}}^{n}$. Moreover, by plurifine locality of the non-pluripolar product we have
\begin{eqnarray*}
\int_{X}(\varphi_{1}^{t}-\varphi_{2}^{t})\left(\theta_{\psi_{1}^{t}}^{n}-\theta_{\psi_{2}^{t}}^{n}\right)
& = &
\int_{\{\psi_{1}>-t\}}(\varphi_{1}^{t}-\varphi_{2}^{t})(\theta_{\psi_{1}^{t}}^{n}-\theta_{\psi_{2}^{t}}^{n})\\
& = &
\int_{\{\psi_{1}>-t\}}(\varphi_{1}^{t}-\varphi_{2}^{t})(\theta_{\psi_{1}}^{n}-\theta_{\psi_{2}}^{n}).
\end{eqnarray*}
Letting $t\to+\infty$ we obtain 
\[
\lim_{t\to+\infty}\int_{X}(\varphi_{1}^{t}-\varphi_{2}^{t})\left(\theta_{\psi_{1}^{t}}^{n}-\theta_{\psi_{2}^{t}}^{n}\right)=\int_{X}(\varphi_{1}-\varphi_{2})\left(\theta_{\psi_{1}}^{n}-\theta_{\psi_{2}}^{n}\right).
\]
Using the fact that $\varphi_{1}^{t}=\varphi_{2}^{t}$ on $\{\varphi_{1}\leq-t\}=\{\varphi_{2}\leq-t\}$ which is contained in the open set $V$ we have that 
\[
{\bf 1}_{\{\varphi_{1}\leq-t\}}S_{1}^{t}={\bf
1}_{\{\varphi_{1}\leq-t\}}S_{2}^{t}.
\]
We thus have 
\[
\int_{X}(\psi_{1}^{t}-\psi_{2}^{t})(S_{1}^{t}-S_{2}^{t})=\int_{\{\psi_{1}>-t\}\cap\{\varphi_{1}>-t\}}(\psi_{1}-\psi_{2})(S_{1}-S_{2}).
\]
Letting $t\to+\infty$ we finish Step 1.1.

{\bf Step 1.2.} We remove the assumptions made in Step 1.1.

It follows from  \cite[Theorem 2.4]{DDL2} that  $\int_{X}(\theta_{\psi_{1}}^{n}-\theta_{\psi_{2}}^{n})=\int_{X}(S_{1}-S_{2})=0$.  Thus adding a constant we can assume that $\varphi_{1}\leq\varphi_{2}$ and $\psi_{1}\leq\psi_{2}$. Let $B>0$ be a constant such that 
\[
\varphi_{2}\leq\varphi_{1}+B\ ;\ \psi_{2}\leq\psi_{1}+B.
\]
For each $\varepsilon\in(0,\frac{1}{\lambda}-1)$ we define 
\[
\psi_{2,\varepsilon}:=\max(\psi_{1},(1+\varepsilon)\psi_{2})\ ;\
\varphi_{2,\varepsilon}:=\max(\varphi_{1},(1+\varepsilon)\varphi_{2}).
\]
Observe that $\psi_{1}\leq\psi_{2,\varepsilon}\leq\psi_{1}+B$ and $\varphi_{1}\leq\varphi_{2,\varepsilon}\leq\varphi_{1}+B$. These are $\omega$-psh functions satisfying the assumptions in Step 1.1 with $C=B+B\varepsilon^{-1}$. Indeed, if $\varphi_{1}(x)<-C$ then
\[
(1+\varepsilon)\varphi_{2}(x)=\varphi_{2}(x)+\varepsilon\varphi_{2}(x)\leq\varphi_{1}(x)+B+\varepsilon(B-C)\leq\varphi_{1}(x).
\]
We can thus apply Step 1.1 to $\psi_{1}$, $\psi_{2,\varepsilon}$, $\varphi_{1}$,$\varphi_{2,\varepsilon}$ to obtain 
\[
\int_{X}(\varphi_{1}-\varphi_{2,\varepsilon})\left(\theta_{\psi_{1}}^{n}-\theta_{\psi_{2,\varepsilon}}^{n}\right)=\int_{X}\left(\psi_{1}-\psi_{2,\varepsilon}\right)(S_{1,\varepsilon}-S_{2,\varepsilon}),
\]
where $S_{1,\varepsilon}:=\sum_{k=0}^{n-1}\theta_{\varphi_{1}}\wedge\theta_{\psi_{1}}^{k}\wedge\theta_{\psi_{2,\varepsilon}}^{n-k-1}$ and $S_{2,\varepsilon}:=\sum_{k=0}^{n-1}\theta_{\varphi_{2,\varepsilon}}\wedge\theta_{\psi_{1}}^{k}\wedge\theta_{\psi_{2,\varepsilon}}^{n-k-1}$. By Theorem \ref{thm: Cap comparison} there exists a continuous function $f:[0,+\infty)\rightarrow[0,+\infty)$ with $f(0)=0$ such that for every Borel set $E$, 
\[
\capa_{\theta,\psi}(E)\leq f(\capa_{\theta}(E)),
\]
where $\psi:=\frac{\varphi_{1}+\varphi_{2}+\psi_{1}+\psi_{2}}{5}-B$ is a $\theta$-psh function with $\int_{X}\theta_{\psi}^{n}>0$. Using
\[
\psi\leq\frac{\varphi_{1}+\psi_{1}+\varphi_{2,\varepsilon}+\psi_{2,\varepsilon}}{5}\leq\psi+B,
\]
and $S_{j,\varepsilon}\leq C(5\theta+dd^{c}(\varphi_{1}+\varphi_{2,\varepsilon}+\psi_{2,\varepsilon}+\psi_{1}))^{n}$ we obtain, for any Borel set $E$,  that
\[
\int_{E}S_{j,\varepsilon}\leq C'f(\capa_{\theta}(E)),\
\forall\varepsilon\in(0,1),j=1,2.
\]
For each $j\in\{1,2\}$ we also have that $S_{j,\varepsilon}\to S_j$, $\theta_{\psi_{2,\varepsilon}}^{n}\to\theta_{\psi_{2}}^{n}$ as $\varepsilon\to0$ in the weak sense of measures (see \cite[Theorem 2.3]{DDL2}). These measures are uniformly dominated by $\capa_{\theta}$. Note also that  $\varphi_{1}-\varphi_{2,\varepsilon},\varphi_{1}-\varphi_{2},\psi_{2,\varepsilon}-\psi_{1},\psi_{2}-\psi_{1}$ are uniformly bounded, quasi-continuous. Moreover, $\psi_{2,\varepsilon}-\psi_{1}\to\psi_{2}-\psi_{1}$, and $\varphi_{1}-\varphi_{2,\varepsilon}\to\varphi_{1}-\varphi_{2}$ in capacity as $\varepsilon\to 0$.  It  thus follows from Theorem \ref{thm: convergence} that 
\[
\lim_{\varepsilon\to 0} \int_{X}(\varphi_{1}-\varphi_{2,\varepsilon})\left(\theta_{\psi_{1}}^{n}-\theta_{\psi_{2,\varepsilon}}^{n}\right) = \int_{X}(\varphi_{1}-\varphi_{2})\left(\theta_{\psi_{1}}^{n}-\theta_{\psi_{2}}^{n}\right)
\]
and 
\[
\lim_{\varepsilon\to 0} \int_{X}\left(\psi_{1}-\psi_{2,\varepsilon}\right)(S_{1,\varepsilon}-S_{2,\varepsilon}) = \int_{X}\left(\psi_{1}-\psi_{2}\right)(S_{1}-S_{2}),
\]
finishing the proof of Step 1.2.

{\bf Step 2.} We merely assume that $\{\theta\}$ is big.  We can assume that $\theta+\omega$ is a K\"ahler form. For $s>2$ we apply the first step for $\theta_{s}:=\theta+s\omega$, which is also K\"ahler, to get 
\[
\int_{X}u\left((\theta_{s}+dd^{c}\psi_{1})^{n}-(\theta_{s}+dd^{c}\psi_{2})^{n}\right)=\int_{X}v
T_{s},
\]
where $u=\varphi_{1}-\varphi_{2}$, $v=\psi_{1}-\psi_{2}$ and 
\begin{flalign*}
T_{s}= & \sum_{k=0}^{n-1} (\theta_s +dd^c
\varphi_1)\wedge(\theta_s+dd^{c}\psi_{1})^{k}\wedge(\theta_{s}+dd^{c}\psi_{2})^{n-k-1}\\
- & \sum_{k=0}^{n-1}(\theta_s+dd^c
\varphi_2)\wedge(\theta_{s}+dd^{c}\psi_{1})^{k}\wedge(\theta_{s}+dd^{c}\psi_{2})^{n-k-1}.
\end{flalign*}
We thus obtain an equality between two polynomials in $s$. Identifying the coefficients we arrive at the conclusion.
\end{proof}

\subsection*{Proof of Theorem \ref{thm: integration by parts}}
We first assume that $\theta$ is K\"ahler, $u=\varphi_1-\varphi_2$ and $v=\psi_1-\psi_2$ where $\psi_1,\psi_2,\varphi_1,\varphi_2$ are $\theta$-psh. Fix $\phi\in\PSH(X,\theta)$ and for each $s\in[0,1]$, $j=1,2$, we set $\psi_{j,s}:=s\psi_{j}+(1-s)\phi$. Note that $\psi_{1,s}\simeq \psi_{2,s}$. It follows from Lemma \ref{lem: pre integration by parts} that for any $s\in[0,1]$,
\[
\int_{X}u\left(\theta_{s\psi_{1}+(1-s)\phi}^{n}-\theta_{s\psi_{2}+(1-s)\phi}^{n}\right)=\int_{X}sv T_s,
\]
where $T_s:= \sum_{k=0}^{n-1}\theta_{\varphi_1} \wedge \theta_{\psi_{1,s}}^k \wedge \theta_{\psi_{2,s}}^{n-k-1}-\sum_{k=0}^{n-1}\theta_{\varphi_2} \wedge \theta_{\psi_{1,s}}^k \wedge \theta_{\psi_{2,s}}^{n-k-1}$. 
We thus have an identity between two polynomials in $s$. Taking the first derivative in $s=0$ we obtain 
\[
\int_{X}udd^{c}v\wedge\theta_{\phi}^{n-1}=\int_{X}vdd^{c}u\wedge\theta_{\phi}^{n-1}.
\]
For the general case we can write $u=\varphi_1-\varphi_2$ and $v=\psi_1-\psi_2$ where $\psi_1,\psi_2,\varphi_1,\varphi_2$ are $A\omega$-psh, for some $A>0$ large enough. We apply the first step with $\theta$ replaced by $\theta + t\omega$, for $t>A$ to get
\[
\int_{X}u\;dd^c v\wedge(t\omega+\theta_
{\phi})^{n-1}=\int_{X}vdd^{c}u\wedge(t\omega+\theta_{\phi})^{n-1}.
\]
Identifying the coefficients of these two polynomials in $t$ we obtain
\[
\int_{X}udd^{c}v\wedge\theta_{\phi}^{n-1}=\int_{X}vdd^{c}u\wedge\theta_{\phi}^{n-1}.
\] 
We now consider $\theta=s_2\theta^2+....+s_n \theta^n$, $\phi:=s_{2}\phi_{2}+...+s_{n}\phi_{n}$ with $s_{2},...,s_{n}\in[0,1]$ and $\sum{s_{j}}=1$. We obtain an identity between two polynomials in $(s_2,...,s_n)$ and identifying the coefficients we arrive at the result.

\bibliographystyle{/Users/lu/Dropbox/Bib/amsplain_nodash}
\bibliography{/Users/lu/Dropbox/Bib/Biblio}

\providecommand{\bysame}{\leavevmode\hbox to3em{\hrulefill}\thinspace}
\providecommand{\MR}{\relax\ifhmode\unskip\space\fi MR }
\providecommand{\MRhref}[2]{%
  \href{http://www.ams.org/mathscinet-getitem?mr=#1}{#2}
}
\providecommand{\href}[2]{#2}
\begin{thebibliography}{10}

\bibitem{BT76}
E.~Bedford and B.~A. Taylor, \emph{The {D}irichlet problem for a complex
  {M}onge-{A}mp\`ere equation}, Invent. Math. \textbf{37} (1976), no.~1, 1--44.

\bibitem{BT82}
E.~Bedford and B.~A. Taylor, \emph{A new capacity for plurisubharmonic
  functions}, Acta Math. \textbf{149} (1982), no.~1-2, 1--40.

\bibitem{BBGZ13}
R.~J. Berman, S.~Boucksom, V.~Guedj, and A.~Zeriahi, \emph{A variational
  approach to complex {M}onge-{A}mp\`ere equations}, Publ. Math. Inst. Hautes
  \'{E}tudes Sci. \textbf{117} (2013), 179--245.

\bibitem{BK07}
Z.~B{\l}ocki and S.~Ko{\l}odziej, \emph{On regularization of plurisubharmonic
  functions on manifolds}, Proc. Amer. Math. Soc. \textbf{135} (2007), no.~7,
  2089--2093.

\bibitem{BEGZ10}
S.~Boucksom, P.~Eyssidieux, V.~Guedj, and A.~Zeriahi, \emph{Monge-{A}mp\`ere
  equations in big cohomology classes}, Acta Math. \textbf{205} (2010), no.~2,
  199--262.

\bibitem{Chio16}
I.~Chiose, \emph{{On the invariance of the total Monge-Amp{\`e}re volume of
  Hermitian metrics}}, Preprint arXiv (2016).

\bibitem{DDL3}
T.~Darvas, E.~Di~Nezza, and C.~H. Lu, \emph{{$L^1$} metric geometry of big
  cohomology classes}, Ann. Inst. Fourier (Grenoble) \textbf{68} (2018), no.~7,
  3053--3086.

\bibitem{DDL2}
T.~Darvas, E.~Di~Nezza, and C.~H. Lu, \emph{Monotonicity of nonpluripolar
  products and complex {M}onge-{A}mp\`ere equations with prescribed
  singularity}, Anal. PDE \textbf{11} (2018), no.~8, 2049--2087.

\bibitem{DDL1}
T.~Darvas, E.~Di~Nezza, and C.~H. Lu, \emph{On the singularity type of full
  mass currents in big cohomology classes}, Compos. Math. \textbf{154} (2018),
  no.~2, 380--409.

\bibitem{DDL4}
T.~Darvas, E.~Di~Nezza, and C.~H. Lu, \emph{{Log-concavity of volume and
  complex Monge-Amp\`ere equations with prescribed singularity}}, arXiv:072018,
  To appear in Mathematische Annalen (2019).

\bibitem{DDL5}
T.~Darvas, E.~Di~Nezza, and C.~H. Lu, \emph{{The metric geometry of singularity
  types}}, arXiv:1909.00839 (2019).

\bibitem{DL18}
T.~Darvas and C.~H. Lu, \emph{{Geodesic stability, the space of rays, and
  uniform convexity in Mabuchi geometry.}}, arXiv:1810.04661, To appear in
  Geometry \& Topology (2019).

\bibitem{Dem94}
J.-P. Demailly, \emph{Regularization of closed positive currents of type
  {$(1,1)$} by the flow of a {C}hern connection}, Contributions to complex
  analysis and analytic geometry, Aspects Math., E26, Friedr. Vieweg,
  Braunschweig, 1994, pp.~105--126.

\bibitem{DnL15}
E.~Di~Nezza and C.~H. Lu, \emph{Generalized {M}onge-{A}mp\`ere capacities},
  Int. Math. Res. Not. IMRN (2015), no.~16, 7287--7322.

\bibitem{DnL17}
E.~Di~Nezza and C.~H. Lu, \emph{Complex {M}onge-{A}mp\`ere equations on
  quasi-projective varieties}, J. Reine Angew. Math. \textbf{727} (2017),
  145--167.

\bibitem{DNT19}
E.~Di~Nezza and S.~Trapani, \emph{{Monge-Amp\`ere measures on contact sets}},
  arXiv:1912.12720. To appear in Math. Research Letters (2019).

\bibitem{GLZ19}
V.~Guedj, C.~H. Lu, and A.~Zeriahi, \emph{Plurisubharmonic envelopes and
  supersolutions}, J. Differential Geom. \textbf{113} (2019), no.~2, 273--313.

\bibitem{GZ05}
V.~Guedj and A.~Zeriahi, \emph{Intrinsic capacities on compact {K}\"{a}hler
  manifolds}, J. Geom. Anal. \textbf{15} (2005), no.~4, 607--639.

\bibitem{GZ07}
V.~Guedj and A.~Zeriahi, \emph{The weighted {M}onge-{A}mp\`ere energy of
  quasiplurisubharmonic functions}, J. Funct. Anal. \textbf{250} (2007), no.~2,
  442--482.

\bibitem{GZbook}
V.~Guedj and A.~Zeriahi, \emph{Degenerate complex {M}onge-{A}mp\`ere
  equations}, EMS Tracts in Mathematics, vol.~26, European Mathematical Society
  (EMS), Z\"{u}rich, 2017.

\bibitem{Kol98}
S.~Ko{\l}odziej, \emph{The complex {M}onge-{A}mp\`ere equation}, Acta Math.
  \textbf{180} (1998), no.~1, 69--117.

\bibitem{LN19}
C.~H. Lu and V.~D. Nguy\^en, \emph{{Complex Hessian equations with prescribed
  singularity on compact K\"ahler manifolds}}, arXiv:1909.02469.

\bibitem{RWN14}
J.~Ross and D.~Witt~Nystr\"{o}m, \emph{Analytic test configurations and
  geodesic rays}, J. Symplectic Geom. \textbf{12} (2014), no.~1, 125--169.

\bibitem{Trus19}
A.~Trusiani, \emph{{$L^1$ metric geometry of potentials with prescribed
  singularities on compact K\"ahler manifolds}}, arXiv:1909.03897 (2019).

\bibitem{Trus20}
A.~Trusiani, \emph{{The strong topology of $\omega$-plurisubharmonic
  functions}}, arXiv:2002.00665 (2020).

\bibitem{Vu20}
D.-V. Vu, \emph{Generalized non-pluripolar products of currents},
  arXiv:2004.11111.

\bibitem{WN19}
D.~Witt~Nystr\"{o}m, \emph{Monotonicity of non-pluripolar {M}onge-{A}mp\`ere
  masses}, Indiana Univ. Math. J. \textbf{68} (2019), no.~2, 579--591.

\bibitem{Xia19}
M.~Xia, \emph{{Integration by parts formula for non-pluripolar product}},
  arXiv:1907.06359 (2019).

\bibitem{Yau78}
S.-T. Yau, \emph{On the {R}icci curvature of a compact {K}\"{a}hler manifold
  and the complex {M}onge-{A}mp\`ere equation. {I}}, Comm. Pure Appl. Math.
  \textbf{31} (1978), no.~3, 339--411.

\end{thebibliography}

\end{document}